\documentclass[iccmp]{ipbook}
\usepackage{amsthm,amsmath,amssymb}
\usepackage{iftex}
\ifXeTeX
  \usepackage{xeCJK}
\else
  \usepackage{CJKutf8}
\fi
\usepackage{tikz}
\usepackage{booktabs}
\usepackage{tabularx}
\usepackage{array}
\usepackage{enumitem}
\usepackage{graphicx}
\usepackage{placeins}
\usepackage[mathlines]{lineno}
\usepackage{hyperref}
\usepackage{subcaption}
\hypersetup{colorlinks,linktocpage,linkcolor={blue},citecolor={blue}}
\newcolumntype{L}[1]{>{\raggedright\arraybackslash}p{#1}}

\startlocaldefs
\theoremstyle{plain}
\newtheorem{theorem}{Theorem}[section]
\newtheorem{lemma}{Lemma}[section]
\newtheorem{proposition}{Proposition}[section]

\theoremstyle{definition}
\newtheorem{definition}{Definition}[section]

\theoremstyle{remark}
\newtheorem{remark}{Remark}[section]

\newcommand{\diag}{\operatorname{diag}}
\newcommand{\tr}{\operatorname{tr}}

\newcommand{\argmin}{\operatorname*{arg\,min}}

\endlocaldefs
\newcommand{\R}{\mathbb{R}}
\newcommand{\U}{\mathbb{U}}

\newcommand{\KL}{{KL}}

\newcommand{\oslashc}{\oslash}

\newcommand{\expo}{\overset{\circ}{\operatorname{exp}}}
\firstpage{1}
\lastpage{1}

\title[Regularized Coupling Maps on Matrix Polytopes]{Regularized Coupling Maps on Matrix Polytopes:
From Assignment to Graph Matching and Optimal Transport}

\author{Binrui Shen}
\address{School of Mathematical Sciences, Laboratory of Mathematics and Complex Systems, MOE,
Beijing Normal University, Beijing 100875, P.R. China}
\address{Faculty of Arts and Sciences, Beijing Normal University, Zhuhai 519087, P.R. China}
\email{binrui.shen@bnu.edu.cn}

\author{Shengxin Zhu*}
\thanks{*Corresponding author}
\address{Research Centers for Mathematics, Advanced Institute of Natural Science,
Beijing Normal University, Zhuhai 519087, P.R. China}
\address{Guangdong Provincial Key Laboratory of Interdisciplinary Research and Application for Data Science,
BNU-HKBU United International College, Zhuhai 519087, P.R. China}
\email{Shengxin.Zhu@bnu.edu.cn}

\begin{document}

\begin{abstract}
\noindent
Three classical problems---linear assignment, graph matching, and optimal transport---share a common structure: they seek an optimal matrix-valued coupling on a matrix polytope. We study \emph{regularized coupling maps} that encompass two families of regularization: entropy/KL regularization, which exponentiates the input score matrix and projects it onto the relevant matrix polytope via the KL-projection; and the Frobenius/Euclidean regularization, which performs the Euclidean projection of the scaled input matrix onto the same polytope. We trace how these two geometries yield complementary algorithmic tools for graph matching (CSGO, ASM, FRAM) and optimal transport (IP-EOT, PSN). The regularized coupling map serves as a direction generator in graph matching and as a solution representation in optimal transport, with the regularizer determining the geometry and the outer task determining the precision standard.
\end{abstract}
\maketitle
\section{Introduction: a Unifying Perspective}

Three classical problems---\textbf{linear assignment}, \textbf{graph matching}, and \textbf{optimal transport}---all employ matrix-valued coupling variables. Yet the role of this coupling differs profoundly across these problems:
In \textbf{linear assignment}, the coupling is a permutation matrix encoding a one-to-one correspondence between two sets of equal cardinality.
In \textbf{graph matching}, the coupling records vertex correspondences and enters a quadratic objective, such as one of the Koopmans--Beckmann type.
In \textbf{optimal transport}, the coupling represents a transportation plan between two probability distributions with prescribed marginals.

Despite these differences, the feasible sets for the coupling variables in all three problems are naturally associated with \emph{matrix polytopes}: the Birkhoff polytope provides a convex relaxation for linear assignment and  graph matching, while the transportation polytope is the feasible set for discrete optimal transport. This observation suggests that much of the algorithmic and analytic machinery developed for one problem may illuminate the others.

The purpose of this exposition is to present a unified framework---{regularized coupling maps on matrix polytopes}---that encompasses several recent developments by the authors and their collaborators. We focus on two families of regularization: {entropy/KL regularization}, which leads to exponential kernels and the Sinkhorn matrix scaling, and {Frobenius/Euclidean regularization}, which corresponds to Euclidean projections onto the Birkhoff polytope. We show how these two geometries, though arising from different regularizers, yield complementary algorithmic tools for both graph matching and optimal transport.
This paper traces the flow from a generic regularized coupling map through two distinct geometric paths---entropy/KL and the Frobenius/Euclidean---and then to their respective applications in graph matching and optimal transport. 

\section{Matrix Polytopes and Regularized Assignment}

\subsection{The Birkhoff and Transportation Polytopes}

The {Birkhoff polytope} $\mathbb{U}$ is the set of $n \times n$ doubly stochastic matrices:
\begin{equation}
\mathbb{U} = \left\{ X \in \R_+^{n \times n} : X\mathbf{1}_n = \mathbf{1}_n,\, X^\top\mathbf{1}_n = \mathbf{1}_n \right\}.
\end{equation}
The celebrated Birkhoff-von Neumann theorem states that $\mathbb{U}$ is precisely the convex hull of the $n!$ permutation matrices; equivalently, the permutation matrices are the extreme points of $\mathbb{U}$ \cite{Birkhoff1946, vonNeumann1953}. This result is foundational: it tells us that relaxing the combinatorial assignment problem from permutation matrices to doubly stochastic matrices does not alter the optimal value, making the problem amenable to continuous optimization.

More generally, given marginal vectors $\mathbf{p} \in \R_+^m$ and $\mathbf{q} \in \R_+^n$ with $\mathbf{1}_m^\top\mathbf{p} = \mathbf{1}_n^\top\mathbf{q}$, the transportation polytope is defined as
\begin{equation}
\mathbb{U}^\mathbf{p}_\mathbf{q} = \left\{ X \in \R_+^{m \times n} : X\mathbf{1}_n = \mathbf{p},\, X^\top\mathbf{1}_m = \mathbf{q} \right\}.
\end{equation}
When $m=n$ and $\mathbf{p}=\mathbf{q}=\mathbf{1}_n$, the transportation polytope coincides with the Birkhoff polytope. Thus, the Birkhoff polytope is the special case.

\subsection{The Regularized Coupling Map}

Given a {score matrix} $S \in \R^{m \times n}$ and a convex regularizer $\Omega$, we define the {regularized coupling map} as follows. 

\begin{definition}[Regularized Coupling Map]
Let $\mathbb{P} \subset \R_+^{m \times n}$ be a matrix polytope (either the Birkhoff polytope $\mathbb{U}$ or a transportation polytope $\mathbb{U}^\mathbf{p}_\mathbf{q}$), let $S \in \R^{m \times n}$ be a score matrix, $\Omega: \mathbb{P} \to \R$ a continuous and strictly convex function, and $\tau > 0$ a regularization parameter. The {regularized coupling map} $S^{\tau,\Omega}_{\mathbb{P}}: \R^{m \times n} \to \mathbb{P}$ is defined by
\begin{equation}
S^{\tau,\Omega}_{\mathbb{P}} = \arg \max_{X \in \mathbb{P}} \ \langle X, S \rangle - \frac{1}{\tau} \Omega(X),
\end{equation}
where 
$\langle X, S \rangle = \sum_{i,j} X_{ij} S_{ij}$ is the Frobenius inner product.
\end{definition}

\noindent To simplify notation, we write $S^{\tau}_{\mathbb{P}}$ in place of $S^{\tau,\Omega}_{\mathbb{P}}$. As we shall see in \eqref{eq.entropy} and \eqref{eq.fro}, different $\tau$ correspond to different $\Omega$.

\begin{remark}[Well-definedness]
When $\Omega$ is strictly convex on $\mathbb{P}$, the objective $X \mapsto \langle X, S \rangle - \frac{1}{\tau} \Omega(X)$ is strictly concave on the convex compact set $\mathbb{P}$. Hence the maximizer exists and is unique, and the map is well-defined as a single-valued function. 
\end{remark}
In this exposition, we focus on two cases where strict convexity holds:
\begin{enumerate}
\item \textbf{Entropy regularization}: 
$\Omega(X)=\mathcal{H}(X)=\sum_{i,j} X_{ij} \log X_{ij}$, with the convention $0\log0=0$, is the \textit{discrete negative entropy} of a matrix $X \in \R_+^{m \times n}$.
The regularized objective is

\begin{equation}
    \langle X,S\rangle-\frac{1}{\beta}\mathcal{H}(X),
    \label{eq.entropy}
\end{equation}
with $\tau=\beta$ and $\beta>0$.

\item the \textbf{Frobenius regularization}: 
$\Omega(X)=\|X\|_F^2$, where $\|\cdot\|_F$ denotes the
Frobenius norm. The regularized objective is
\begin{equation}
\langle X,S\rangle-\frac{1}{\theta}\|X\|_F^2,
        \label{eq.fro}
\end{equation}
with $\tau=\theta$ and $\theta>0$.
\end{enumerate}
These two choices of $\Omega$ dominate the literature and will guide our exposition.
For general convex regularizers where strict convexity may fail, the definition should be understood as a set-valued map.

\begin{table}[h]
\centering
\caption{Birkhoff polytope on two regularization geometries. $\Gamma^{{KL}}_{\mathbb{U}}$ and $\Gamma^{{F}}_{\mathbb{U}}$: KL and Euclidean projections onto $\mathbb{U}$ (detailed in Sec. 3.1 and 4.1).}
\label{tab:comparison}

\renewcommand{\arraystretch}{1.2}
\begin{tabularx}{\linewidth}{
    @{}
    >{\raggedright\arraybackslash}p{0.20\linewidth}
    >{\raggedright\arraybackslash}X
    >{\raggedright\arraybackslash}X
    @{}
}
\toprule
\textbf{Aspect}
& \textbf{Entropy/KL Regularization}
& \textbf{Frobenius/Euclidean Regularization} \\
\midrule

Regularizer
& $\frac{1}{\beta}\mathcal{H}(X)$, where
  $\mathcal{H}(X)=\sum_{i,j}X_{ij}\log X_{ij}$
& $\frac{1}{\theta}\lVert X\rVert_F^2$, where
  $\lVert X\rVert_F^2=\sum_{i,j}X_{ij}^2$ \\
\addlinespace[0.6em]

Problem
& Entropy-Regularized Assignment (ERA)
& Frobenius-Regularized Assignment (FRA) \\
\addlinespace[0.6em]

Solution form
& $S^\beta_{\mathbb{U}}
  =\Gamma^{\KL}_{\mathbb{U}}\bigl(\expo(\beta S)\bigr)$
& $S_{\mathbb{U}}^\theta
  =\Gamma^{F}_{\mathbb{U}}\bigl(\theta S/2\bigr)$ \\
\addlinespace[0.6em]

Subsequent use
& Probabilistic directions in graph matching; entropic optimal transport
& Euclidean directions in graph matching \\

\bottomrule
\end{tabularx}
\end{table}

\section{The Entropy/KL Path: Softassign and Sinkhorn Scaling}

\subsection{Entropy-Regularized Assignment and Entropic Optimal Transport}
The {entropy-regularized assignment problem} (ERA) on the Birkhoff polytope is defined as the unique maximizer:
\begin{equation}
S^\beta_{\mathbb{U}} = \arg \max_{X \in \mathbb{U}}  \ \langle X, S \rangle - \frac{1}{\beta} \mathcal{H}(X) ,
\end{equation}
where $\beta > 0$ is the {inverse temperature} (sharpness parameter). This maximizer is unique and admits the explicit form of a KL projection \cite{Cuturi2013}:
\begin{equation}
S^\beta_{\mathbb{U}} = \Gamma^{\KL}_{\mathbb{U}} ( \expo(\beta S)) =D(\mathbf{u}) \expo(\beta S) D(\mathbf{v}),
\end{equation}
where $\Gamma^{\KL}_{\mathbb{U}}$ denotes the KL-projection operator onto $\mathbb{U}$. Specifically,  following \cite{Benamou2015}, for any positive kernel $K$, $\Gamma^{{KL}}_{\mathbb{U}}(K)$ is defined as the minimizer of the generalized Kullback--Leibler divergence $D_{{KL}}(\cdot\|K)$ over $\mathbb{U}$:
$$\Gamma^{{KL}}_{\mathbb{U}}(K) = \arg\min_{P \in \mathbb{U}} D_{{KL}}(P \| K),
\  D_{{KL}}(P\|K) = \sum_{i,j} ( P_{ij}\log\frac{P_{ij}}{K_{ij}} - P_{ij} + K_{ij} ).
$$
$D(\mathbf{u})$ and $D(\mathbf{v})$ are diagonal matrices constructed from positive scaling vectors $\mathbf{u}, \mathbf{v} \in \R_{++}^n$.

These vectors $\mathbf{u}, \mathbf{v}$ are computed via the {Sinkhorn matrix scaling}:
\begin{subequations}
\begin{align}
K_\beta(S)  &= \expo(\beta S),\\
\mathbf{u}^{(l+1)} &= \mathbf{1}_n \oslashc (K_\beta(S) \mathbf{v}^{(l)}),\\
\mathbf{v}^{(l+1)} &= \mathbf{1}_n \oslashc (K_\beta(S)^\top \mathbf{u}^{(l+1)}),
\end{align}
\end{subequations}
where $\oslashc$ and $\expo$ denote element-wise division and exponentiation, respectively. The {Sinkhorn--Knopp theorem} \cite{Sinkhorn1964, SinkhornKnopp1967} guarantees that for positive matrices, this alternating scaling converges to the unique doubly stochastic matrix in the positive diagonal equivalence class of $K_\beta(S)$.

The historical lineage of this algorithm is fascinating. The iterative proportional fitting procedure appeared in Deming and Stephan~\cite{Deming1940} for contingency table adjustment, in Kruithof~\cite{Kruithof1937} for telephone forecasting, and was rediscovered multiple times across economics, statistics, and transportation science \cite{Idel2016}. Sinkhorn's 1967 paper \cite{SinkhornKnopp1967} provided the first rigorous convergence proof for arbitrary positive matrices, establishing what is now known as the {Sinkhorn--Knopp algorithm} or {Sinkhorn matrix scaling}.

The connection between \textit{softassign} \cite{Gold1996} and optimal transport is immediate. The entropy-regularized optimal transport problem is:
\begin{equation}
S^\beta_{\mathbb{U}^\mathbf{p}_\mathbf{q}} = \arg \max_{X \in \mathbb{U}^\mathbf{p}_\mathbf{q}}  \langle X, S \rangle - \frac{1}{\beta} \mathcal{H}(X).
\label{eq.eot}
\end{equation}
\noindent  When $\mathbf{p} = \mathbf{q} = \mathbf{1}_n$, $S^\beta_{\U^{\mathbf{1}_n}_{\mathbf{1}_n}} = S^\beta_{\mathbb{U}}$. Thus, softassign can be considered as an entropy-regularized optimal transport with unit row and column marginals, and both share the same solution form:

\begin{equation}
   S^\beta_{\mathbb{U}^\mathbf{p}_\mathbf{q}} = \Gamma^{\KL}_{\mathbb{U}^\mathbf{p}_\mathbf{q}} ( \expo(\beta S)). 
\end{equation}

\subsection{Why Continuation Matters}
\label{sec:continuation}

In both graph matching and optimal transport, a larger $\beta$ yields a solution closer to the unregularized solution (Figure~\ref{fig:softassign-beta-effect} gives an example of this sharpening path), while the Sinkhorn iterations may become numerically unstable or slow to converge. Therefore, it is crucial to choose a moderate $\beta$ that balances accuracy and stability. A practical strategy is to solve a sequence of entropy-regularized problems with gradually increasing $\beta$ (i.e., decreasing temperature), and leverage the diminishing marginal effect to determine an appropriate parameter. The naive approach—restarting Sinkhorn matrix scaling from scratch for each new $\beta$—is computationally wasteful. The following theorem overcomes this inefficiency by enabling a ``warm start'' from a previously computed solution.
\begin{figure}[htbp]
\centering
\includegraphics[width=0.78\textwidth]{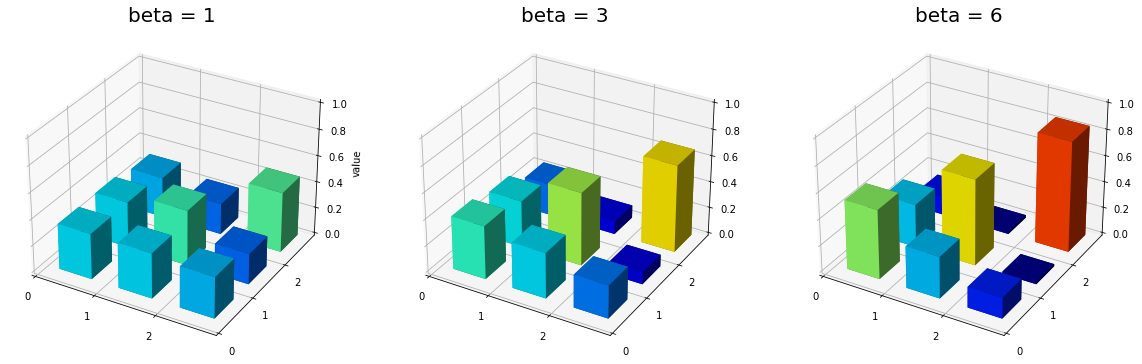}
\caption{Illustration of the sharpening effect of the inverse temperature $\beta$ in softassign \cite{Shen2024CVPR}.}
\label{fig:softassign-beta-effect}
\end{figure}

\begin{theorem}[Temperature Transition \cite{Shen2024CVPR}]
Assume that for some \(\beta_1,\beta_2>0\), the solution
\(S^{\beta_1}_{\mathbb{U}^\mathbf{p}_\mathbf{q}} = \Gamma^{{KL}}_{\mathbb{U}^\mathbf{p}_\mathbf{q}}(\expo(\beta_1 S))\) has been computed.
Then the solution at inverse temperature \(\beta_2\) is obtained by
\[
S^{\beta_2}_{\mathbb{U}^\mathbf{p}_\mathbf{q}}
= \Gamma^{{KL}}_{\mathbb{U}^\mathbf{p}_\mathbf{q}}\Bigl(
\bigl(S^{\beta_1}_{\mathbb{U}^\mathbf{p}_\mathbf{q}}\bigr)^{\odot (\beta_2/\beta_1)}
\Bigr),
\]
where \(S^{\odot \beta}\) denotes the element-wise power, i.e. \((S^{\odot \beta})_{ij} = (S_{ij})^\beta\).
\end{theorem}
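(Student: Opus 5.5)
The plan is to use the diagonal-scaling characterization of the KL projection onto $\mathbb{U}^\mathbf{p}_\mathbf{q}$. The key fact is that $\Gamma^{KL}_{\mathbb{U}^\mathbf{p}_\mathbf{q}}(K)$ depends on a positive kernel $K$ only through its diagonal equivalence class. Writing the Lagrangian of $\min_{P\in\mathbb{U}^\mathbf{p}_\mathbf{q}} D_{KL}(P\|K)$ with multipliers $\mathbf{f}\in\R^m$, $\mathbf{g}\in\R^n$ for the marginal constraints, stationarity gives $\log P_{ij}-\log K_{ij} = f_i+g_j$. Hence the minimizer has the form $P = D(\mathbf{u})KD(\mathbf{v})$ with $\mathbf{u},\mathbf{v}$ positive. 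The nonnegativity constraint is inactive, since the entropy's derivative blows up at $0$.

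Conversely, I would show that any matrix of the form $D(\mathbf{u})KD(\mathbf{v})$ lying in $\mathbb{U}^\mathbf{p}_\mathbf{q}$ is the projection. The reason is that it satisfies the KKT conditions of a strictly convex problem, which are sufficient. Equivalently, for any feasible $P$,
\[
D_{KL}(P\|K)=D_{KL}(P\|P^\ast)+\text{const},
\]
because $\sum_{ij}P_{ij}(\log u_i+\log v_j)=\mathbf{p}^\top\log\mathbf{u}+\mathbf{q}^\top\log\mathbf{v}$ is constant on the polytope. From this I get the invariance lemma: $\Gamma^{KL}_{\mathbb{U}^\mathbf{p}_\mathbf{q}}(D(\mathbf{a})KD(\mathbf{b}))=\Gamma^{KL}_{\mathbb{U}^\mathbf{p}_\mathbf{q}}(K)$ for all positive $\mathbf{a},\mathbf{b}$.

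Next I would apply this to the theorem. By the solution form above, $S^{\beta_1}_{\mathbb{U}^\mathbf{p}_\mathbf{q}}=D(\mathbf{u}_1)\expo(\beta_1 S)D(\mathbf{v}_1)$ with $\mathbf{u}_1,\mathbf{v}_1$ positive. Raising entrywise to the power $r=\beta_2/\beta_1$ gives
\[
\bigl(S^{\beta_1}_{\mathbb{U}^\mathbf{p}_\mathbf{q}}\bigr)^{\odot r}=D(\mathbf{u}_1^{\odot r})\,\expo(\beta_2 S)\,D(\mathbf{v}_1^{\odot r}).
\]
This holds because $(u_i e^{\beta_1 S_{ij}} v_j)^r=u_i^r e^{\beta_2 S_{ij}} v_j^r$. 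The right-hand side is diagonally equivalent to $\expo(\beta_2 S)$, so the invariance lemma gives
\[
\Gamma^{KL}_{\mathbb{U}^\mathbf{p}_\mathbf{q}}\bigl((S^{\beta_1}_{\mathbb{U}^\mathbf{p}_\mathbf{q}})^{\odot r}\bigr)=\Gamma^{KL}_{\mathbb{U}^\mathbf{p}_\mathbf{q}}(\expo(\beta_2 S))=S^{\beta_2}_{\mathbb{U}^\mathbf{p}_\mathbf{q}},
\]
which is the claim.

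The main obstacle is establishing the invariance lemma rigorously, in particular the strict positivity of the scaled form. Positivity of $\expo(\beta_1S)$ guarantees that $S^{\beta_1}$ has all entries strictly positive, so the power is well defined and the kernel is positive. I would also check that the constant-shift identity uses only the marginal constraints, and rely on strict convexity of $D_{KL}(\cdot\|K)$ for uniqueness. Existence of the scaling for positive kernels with $\mathbf{1}^\top\mathbf{p}=\mathbf{1}^\top\mathbf{q}$ follows from the Sinkhorn--Knopp theorem cited earlier, or from its generalization to arbitrary marginals.
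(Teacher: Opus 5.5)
Your proposal is correct and rests on the same key fact as the paper: the invariance of $\Gamma^{KL}_{\mathbb{U}^\mathbf{p}_\mathbf{q}}$ under diagonal scaling, which you prove with the same constant-shift identity on the polytope (and you additionally justify the scaled solution form via KKT, which the paper simply takes from the literature). The only difference is that the paper first detours through the composition lemma using $\expo(\beta_2 S)=\expo(\beta_1 S)\odot\expo((\beta_2-\beta_1)S)$, whereas you directly compute $\bigl(S^{\beta_1}_{\mathbb{U}^\mathbf{p}_\mathbf{q}}\bigr)^{\odot r}=D(\mathbf{u}_1^{\odot r})\,\expo(\beta_2 S)\,D(\mathbf{v}_1^{\odot r})$, which is exactly the step the paper leaves implicit at the end of its argument.
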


The transition theorem converts independent Sinkhorn solves into warm-started updates. Without transition, each new $\beta$ is solved from a cold start, with cost
\(
O(T_{\mathrm{Sinkhorn}}\,mn).
\). With transition, moving from $\beta_1$ to $\beta_2$ requires one Hadamard power, costing $O(mn)$, followed by Sinkhorn scaling with cost
\(
O(T'_{\mathrm{Sinkhorn}}\,mn).
\)
The transition does not improve the worst-case contraction factor, but reuses the previous scaling information and can reduce the number of Sinkhorn iterations in practice. Experiments in \cite{Shen2024CVPR} report approximately $85\%$ computational savings for random matrices with $n=2000$.

This theorem is supported by two important lemmas.
\begin{lemma}[KL-projection invariance under diagonal scaling \cite{Shen2024CVPR}]
For any marginals \(\mathbf{p}\in\mathbb{R}^m_+\) and \(\mathbf{q}\in\mathbb{R}^n_+\), any scaling vectors \(\mathbf{r}\in\mathbb{R}^m_{++}\) and \(\mathbf{c}\in\mathbb{R}^n_{++}\), and any kernel \(K\in\mathbb{R}^{m\times n}_{++}\), the KL projection onto \(\mathbb{U}^{\mathbf{p}}_{\mathbf{q}}\) satisfies
\[
\Gamma^{{KL}}_{\mathbb{U}^{\mathbf{p}}_{\mathbf{q}}}\bigl(D(\mathbf{r}) K D(\mathbf{c})\bigr)
=
\Gamma^{{KL}}_{\mathbb{U}^{\mathbf{p}}_{\mathbf{q}}}(K).
\]
\end{lemma}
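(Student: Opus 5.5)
The plan is to compare the two KL objectives directly. Over the feasible set \(\mathbb{U}^{\mathbf{p}}_{\mathbf{q}}\), they differ only by a constant, so they have the same minimizer. Write \(K' = D(\mathbf{r}) K D(\mathbf{c})\), so that \(K'_{ij} = r_i K_{ij} c_j > 0\).

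For any \(P \in \mathbb{U}^{\mathbf{p}}_{\mathbf{q}}\), expand the generalized KL divergence:
\[
D_{KL}(P\|K') = \sum_{i,j}\Bigl(P_{ij}\log\frac{P_{ij}}{K_{ij}} - P_{ij} + K'_{ij}\Bigr) - \sum_{i,j} P_{ij}\log r_i - \sum_{i,j} P_{ij}\log c_j .
\]
The marginal constraints \(P\mathbf{1}_n=\mathbf{p}\) and \(P^\top\mathbf{1}_m=\mathbf{q}\) then give
\[
\sum_{i,j}P_{ij}\log r_i = \sum_i p_i\log r_i, \qquad \sum_{i,j}P_{ij}\log c_j = \sum_j q_j \log c_j .
\]
Both quantities are independent of \(P\). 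The terms \(\sum_{ij}K_{ij}\) and \(\sum_{ij}K'_{ij}\) are also constants. Therefore
\[
D_{KL}(P\|K') = D_{KL}(P\|K) + C(\mathbf{p},\mathbf{q},\mathbf{r},\mathbf{c},K)
\]
for every feasible \(P\), where \(C\) does not depend on \(P\).

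Next, I argue that each argmin is a single point. The map \(P\mapsto D_{KL}(P\|K)\) is continuous and strictly convex on the compact convex polytope \(\mathbb{U}^{\mathbf{p}}_{\mathbf{q}}\), because \(x\log x\) is strictly convex with the convention \(0\log 0=0\) and \(K>0\). Nonemptiness is the only place the nonnegativity of \(\mathbf{p},\mathbf{q}\) and the equal-mass assumption enter: \(\mathbf{p}\mathbf{q}^\top/(\mathbf{1}^\top\mathbf{p})\) is feasible when the total mass is positive, and \(0\) is feasible otherwise. Hence each KL projection is uniquely defined. Since the two objectives differ by a constant on the feasible set, their unique minimizers coincide, which proves the lemma.

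The main care point is the bookkeeping with zero entries of \(P\), which arise when some \(p_i\) or \(q_j\) vanish. The identity \(\log(P_{ij}/K'_{ij}) = \log(P_{ij}/K_{ij}) - \log r_i - \log c_j\) is used only where \(P_{ij}>0\). Where \(P_{ij}=0\), both sides contribute \(0\) under the convention, and the linear terms \(P_{ij}\log r_i\) also vanish. So the constant-shift identity holds termwise without issue.

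An alternative route would use the Sinkhorn form \(\Gamma^{KL}(K)=D(\mathbf{u})KD(\mathbf{v})\), absorbing the scaling via \(D(\mathbf{u}\oslash\mathbf{r})K'D(\mathbf{v}\oslash\mathbf{c})\) and invoking uniqueness in the diagonal equivalence class. However, that route needs strictly positive marginals, so I prefer the variational argument above.
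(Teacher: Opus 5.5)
Your proof is correct and follows the same route as the paper. You expand the generalized KL divergence against $D(\mathbf{r})KD(\mathbf{c})$, use the marginal constraints to collapse the $\log r_i$ and $\log c_j$ terms to $\sum_i p_i\log r_i+\sum_j q_j\log c_j$, and conclude that the two objectives differ by a $P$-independent constant on $\mathbb{U}^{\mathbf{p}}_{\mathbf{q}}$. Your extra checks make explicit what the paper's proof leaves implicit: strict convexity plus nonemptiness giving a unique minimizer, and the termwise handling of zero entries of $P$. The Sinkhorn-based alternative you set aside is the route the paper attributes to the original source.
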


\noindent This lemma was proved in \cite{Shen2024CVPR} using the uniqueness theory of Sinkhorn scaling. Here we provide an alternative proof from the perspective of KL projection. For any \(P\in\mathbb{U}^{\mathbf{p}}_{\mathbf{q}}\), by the definition of the generalized KL divergence, we have
\begin{proof}
\[
\begin{aligned}
D_{{KL}}\bigl(P \,\|\, D(\mathbf{r})KD(\mathbf{c})\bigr)
&= \sum_{i,j} \left( P_{ij}\log\frac{P_{ij}}{r_i K_{ij} c_j} - P_{ij} + r_i K_{ij} c_j \right) \\
&= \sum_{i,j} \left( P_{ij}\log\frac{P_{ij}}{K_{ij}} - P_{ij} + K_{ij} \right) \\
&\quad - \sum_{i,j} P_{ij}(\log r_i + \log c_j)
  + \sum_{i,j} (r_i c_j - 1) K_{ij}.
\end{aligned}
\]
Since \(P\in\mathbb{U}^{\mathbf{p}}_{\mathbf{q}}\), we have \(\sum_j P_{ij}=p_i\) and \(\sum_i P_{ij}=q_j\), so
\[
\sum_{i,j} P_{ij}(\log r_i + \log c_j)
= \sum_i p_i \log r_i + \sum_j q_j \log c_j,
\]
which is independent of \(P\). The remaining term \(\sum_{i,j}(r_i c_j-1)K_{ij}\) is also constant with respect to \(P\). Hence
\[
D_{{KL}}\bigl(P \,\|\, D(\mathbf{r})KD(\mathbf{c})\bigr)
= D_{{KL}}(P \| K) + \text{constant}.
\]
Therefore, the minimizer over \(P\in\mathbb{U}^{\mathbf{p}}_{\mathbf{q}}\) is identical for both divergences, which proves the claimed invariance.
\end{proof}
The Lemma mathematically characterizes the invariance of the KL-divergence projection operator \(\Gamma^{{KL}}_{\mathbb{U}^{\mathbf{p}}_{\mathbf{q}}}\) under row‑column scaling transformations. Algebraically, the lemma establishes an equivalence relation: matrices connected via independent row‑and‑column scalings form a equivalent class, and every member of that class projects to the identical point under the KL divergence with fixed marginals. This provides a parametric redundancy for numerical algorithms such as Sinkhorn iterations, which can be exploited to improve numerical stability and accelerate convergence. 

\begin{lemma}[KL-projection Composition \cite{Shen2024CVPR}]
For any matrices \(K_1, K_2 \in \mathbb{R}_{++}^{m\times n}\), the KL projection satisfies the following composition property:
\[
\Gamma^{{KL}}_{{\mathbb{U}^\mathbf{p}_\mathbf{q}}}\Bigl(
   \Gamma^{{KL}}_{{\mathbb{U}^\mathbf{p}_\mathbf{q}}}(K_1) \odot K_2
\Bigr)=\Gamma^{{KL}}_{{\mathbb{U}^\mathbf{p}_\mathbf{q}}}(K_1 \odot K_2)
,
\]
where \(\odot\) denotes the Hadamard (element-wise) product.
\end{lemma}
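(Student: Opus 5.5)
The plan is to reduce the composition property to the diagonal-scaling invariance lemma just proved. The key structural fact is that the KL projection of a positive kernel onto \(\mathbb{U}^{\mathbf{p}}_{\mathbf{q}}\) is itself a diagonal scaling of that kernel. That is, for \(K_1\in\mathbb{R}^{m\times n}_{++}\) there exist \(\mathbf{r}\in\mathbb{R}^m_{++}\) and \(\mathbf{c}\in\mathbb{R}^n_{++}\) with
\[
\Gamma^{{KL}}_{\mathbb{U}^{\mathbf{p}}_{\mathbf{q}}}(K_1)=D(\mathbf{r})K_1D(\mathbf{c}).
\]
For the Birkhoff case this is the form \(D(\mathbf{u})\expo(\beta S)D(\mathbf{v})\) stated earlier via the Sinkhorn--Knopp theorem. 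For general marginals I would establish it from the first-order optimality conditions of \(\min_{P\in\mathbb{U}^{\mathbf{p}}_{\mathbf{q}}} D_{KL}(P\|K_1)\).

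To do so, I introduce multipliers \(\mathbf{f}\in\mathbb{R}^m\) and \(\mathbf{g}\in\mathbb{R}^n\) for the row and column constraints. Stationarity gives \(\log(P_{ij}/K_{1,ij})=f_i+g_j\), hence \(P_{ij}=e^{f_i}K_{1,ij}e^{g_j}\), so \(\mathbf{r}=e^{\mathbf{f}}\) and \(\mathbf{c}=e^{\mathbf{g}}\).

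\textbf{Main obstacle.} Justifying this stationarity argument rigorously is the hardest step. It requires the minimizer to be strictly positive, so that the nonnegativity constraints are inactive. I would argue this as follows. Suppose \(P_{ij}=0\) at the optimum. Choose a feasible \(Q\) with full support, for example \(\mathbf{p}\mathbf{q}^\top/(\mathbf{1}^\top\mathbf{p})\), which exists when \(\mathbf{p},\mathbf{q}\) are positive. Then move toward \(Q\). Because the derivative of \(x\log x\) tends to \(-\infty\) as \(x\to0^+\), the objective strictly decreases for small steps, contradicting optimality.

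If some marginal entries vanish, the corresponding rows or columns of every feasible \(P\) are zero. In that case the scaling entries can be taken positive but arbitrary on those rows and columns after restricting to the remaining support. I would note this degenerate case briefly, or simply assume \(\mathbf{p},\mathbf{q}\) positive, as in the Sinkhorn setting.

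With the scaling form in hand, the rest is an identity. Diagonal scaling commutes with the Hadamard product:
\[
\bigl(D(\mathbf{r})K_1D(\mathbf{c})\bigr)\odot K_2 = D(\mathbf{r})\,(K_1\odot K_2)\,D(\mathbf{c}),
\]
since both sides have \((i,j)\) entry \(r_iK_{1,ij}K_{2,ij}c_j\). The kernel \(K_1\odot K_2\) lies in \(\mathbb{R}^{m\times n}_{++}\), so the KL-projection invariance lemma applies directly. It gives
\[
\Gamma^{{KL}}_{\mathbb{U}^{\mathbf{p}}_{\mathbf{q}}}\bigl(D(\mathbf{r})(K_1\odot K_2)D(\mathbf{c})\bigr)=\Gamma^{{KL}}_{\mathbb{U}^{\mathbf{p}}_{\mathbf{q}}}(K_1\odot K_2),
\]
which is the claim.

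As a consistency check, the temperature transition theorem then follows by taking \(K_1=\expo(\beta_1S)\) and \(K_2=\expo((\beta_2-\beta_1)S)\). One writes \(\bigl(S^{\beta_1}_{\mathbb{U}^\mathbf{p}_\mathbf{q}}\bigr)^{\odot(\beta_2/\beta_1)}\) as a diagonal scaling of \(\expo(\beta_2S)\) and invokes invariance once more. This confirms that the scaling-form step is the essential ingredient.
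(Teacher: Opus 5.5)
Your proof is correct and follows the same route as the paper. The paper justifies the lemma in one line: $\Gamma^{KL}_{\mathbb{U}^{\mathbf{p}}_{\mathbf{q}}}(K_1)\odot K_2$ and $K_1\odot K_2$ differ only by positive row and column scalings, so the invariance lemma applies. Your strict-positivity and stationarity argument for $\Gamma^{KL}_{\mathbb{U}^{\mathbf{p}}_{\mathbf{q}}}(K_1)=D(\mathbf{r})K_1D(\mathbf{c})$ fills in the step the paper takes for granted, under positive marginals, which is also the paper's implicit setting.
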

\noindent The composition law states that, when projecting the Hadamard product $K_1\odot K_2$, one may replace $K_1$ by its KL-projection without changing the final result. This is because the two resulting products differ only by positive row and column scalings, which do not affect the projection. Figure \ref{fig:kl-properties} visualizes the theoretical result established above.

This composition law underpins the temperature transition theorem. Indeed, the Gibbs kernel at \(\beta_2\) factorizes as
\[
\expo(\beta_2 S) = \expo(\beta_1 S) \odot \expo((\beta_2 - \beta_1) S).
\]
Applying the composition lemma with \(K_1=\expo(\beta_1 S)\) and \(K_2=\expo((\beta_2-\beta_1)S)\) yields
\[
S^{\beta_2}_{\mathbb{U}^{\mathbf{p}}_{\mathbf{q}}}
= \Gamma^{{KL}}_{\mathbb{U}^{\mathbf{p}}_{\mathbf{q}}}\Bigl(
   S^{\beta_1}_{\mathbb{U}^{\mathbf{p}}_{\mathbf{q}}} \odot \expo((\beta_2 - \beta_1) S)
\Bigr).
\]
Since \(S^{\beta_1}_{\mathbb{U}^{\mathbf{p}}_{\mathbf{q}}} = \Gamma^{{KL}}_{\mathbb{U}^{\mathbf{p}}_{\mathbf{q}}}(\expo(\beta_1 S)) = D(\mathbf{u})\expo(\beta_1 S)D(\mathbf{v})\), the invariance lemma implies that the diagonal scalings do not affect the projection. Consequently,
\[
S^{\beta_2}_{\mathbb{U}^{\mathbf{p}}_{\mathbf{q}}}
= \Gamma^{{KL}}_{\mathbb{U}^{\mathbf{p}}_{\mathbf{q}}}\Bigl(
   \bigl(S^{\beta_1}_{\mathbb{U}^{\mathbf{p}}_{\mathbf{q}}}\bigr)^{\odot (\beta_2/\beta_1)}
\Bigr),
\]
which recovers the temperature transition formula. Thus, the composition law provides the theoretical basis for efficient warm-start continuation. 

\begin{figure}
    \centering
    \includegraphics[width=0.9\linewidth]{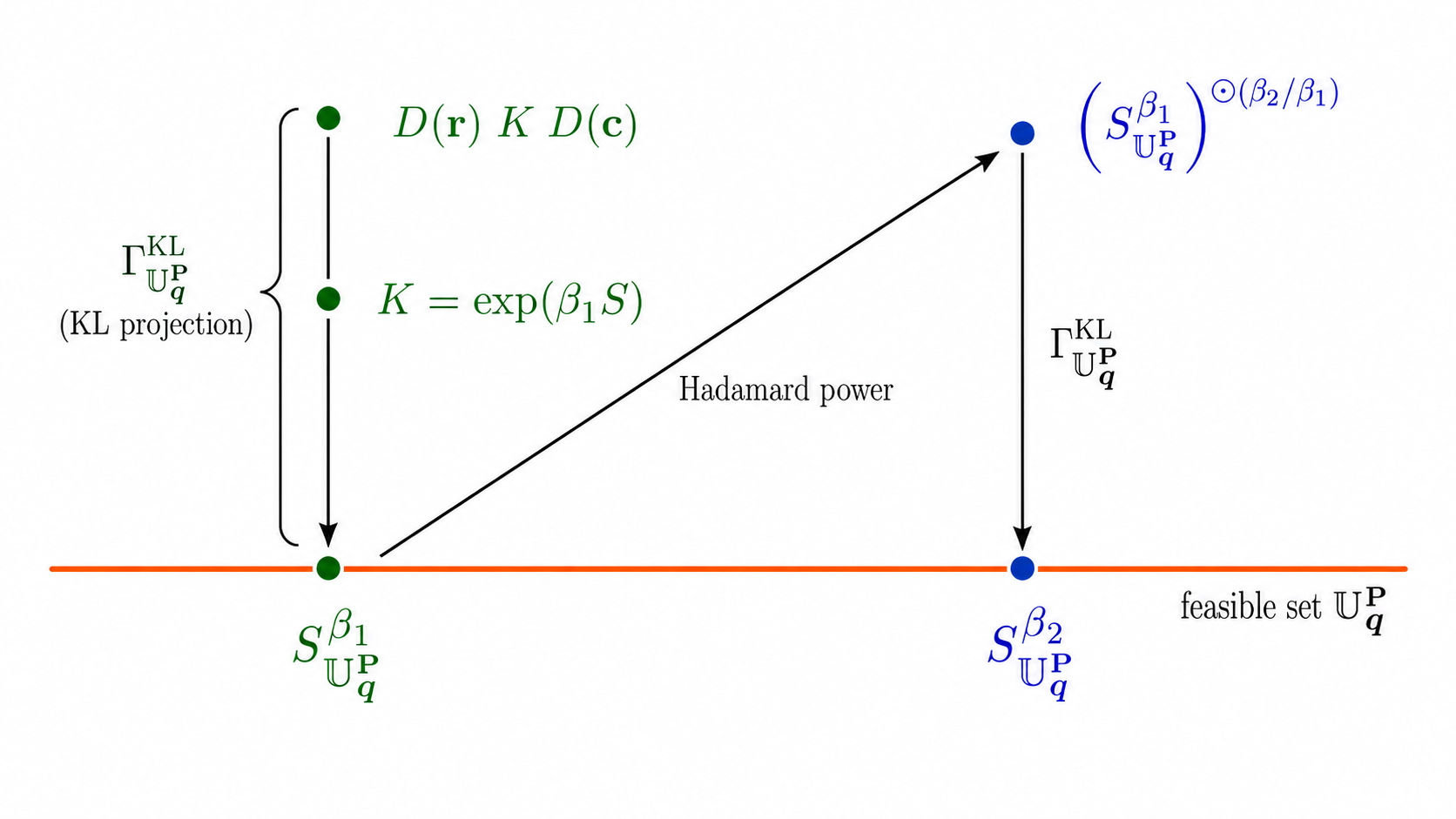}
    \caption{Properties of the KL projection in entropic optimal transport.}
    \label{fig:kl-properties}
\end{figure}

\subsection{Assignment Error and Scale-Invariant Parameterization}

A fundamental question is: how close is the entropy-regularized solution to the true linear assignment optimum? Define the {normalized assignment error}:
\begin{equation}
\Delta_\beta^S = \frac{1}{n}\bigl(\langle S^\infty_{\mathbb{U}}, S \rangle - \langle S^\beta_{\mathbb{U}}, S \rangle\bigr),
\end{equation}
where $S^\infty_{\mathbb{U}}=\lim\limits_{\beta \to \infty} S^\beta_{\mathbb{U}}$ is the limiting linear assignment solution. Since $-n\log n \leq \mathcal{H}(X) \leq 0$, we immediately obtain:
\begin{equation}
\Delta_\beta^S \leq \frac{\log n}{\beta}.
\end{equation}
This bound reveals a critical scaling issue: fixing $\beta$ yields a worst-case error bound that grows logarithmically with $n$. The remedy is the scale-invariant parameterization $\beta = \gamma \log n$, which gives:
\begin{equation}
\Delta_\beta^S \leq \frac{1}{\gamma}.
\end{equation}

Now the error bound depends only on the dimensionless parameter $\gamma$, decoupling precision from problem size. This insight underlies the {CSGO} (Constrained-Softassign Gradient Optimization) \cite{Shen2026CSGO}, which uses $\gamma \log n$ as the default regularization schedule for large-scale graph matching.

\section{The Frobenius/Euclidean Path}

\subsection{Frobenius-Regularized Assignment}

The Frobenius-regularized assignment problem (FRA) is defined as the unique maximizer:
\begin{equation}
S_{\mathbb{U}}^\theta = \arg \max_{X \in \mathbb{U}} \  \langle X, S \rangle - \frac{1}{\theta}\|X\|_F^2 .
\end{equation}
\noindent By completing the square, this is equivalent to a Euclidean projection:
\begin{equation}
S_{\mathbb{U}}^\theta = \argmin_{X \in \mathbb{U}} \left\|X - \frac{\theta}{2}S\right\|_F^2 = \Gamma^{F}_{\mathbb{U}}\!\left(\frac{\theta}{2}S\right).
\end{equation}
Here we use the notation $\Gamma^{F}_{\mathbb{U}}$ for the Euclidean projection operator, which is well-defined since the projection onto a closed convex set is unique. The parameter $\theta$ has an intuitive interpretation: small $\theta$ gives diffuse, conservative directions near the center of $\mathbb{U}$; large $\theta$ drives the projection toward the optimal face of permutation matrices. Figure~\ref{fig:fra-theta-path} illustrates this parameter path.

\begin{figure}[t]
\centering
\begin{minipage}[t]{0.32\textwidth}
\centering
\includegraphics[width=\textwidth]{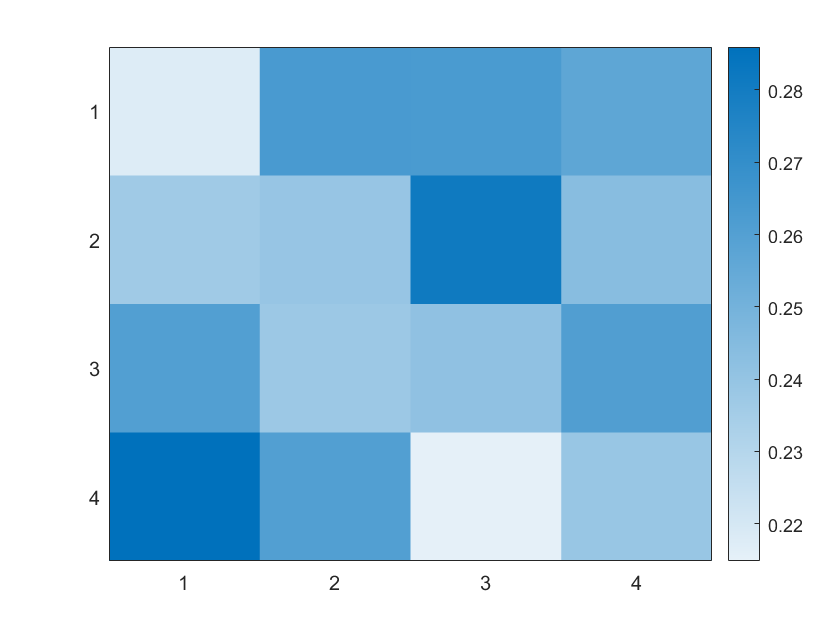}\\
\small $\theta=0.1$
\end{minipage}\hfill
\begin{minipage}[t]{0.32\textwidth}
\centering
\includegraphics[width=\textwidth]{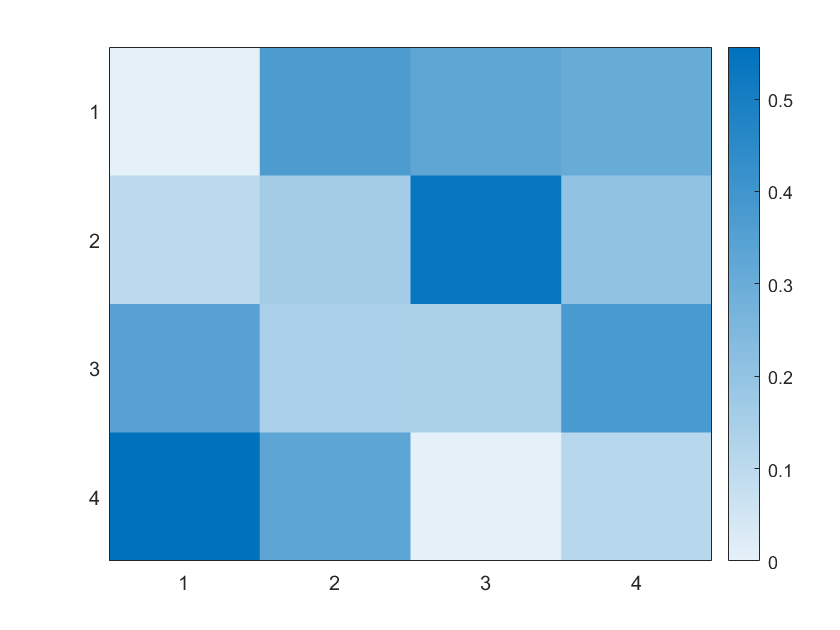}\\
\small $\theta=1$
\end{minipage}\hfill
\begin{minipage}[t]{0.32\textwidth}
\centering
\includegraphics[width=\textwidth]{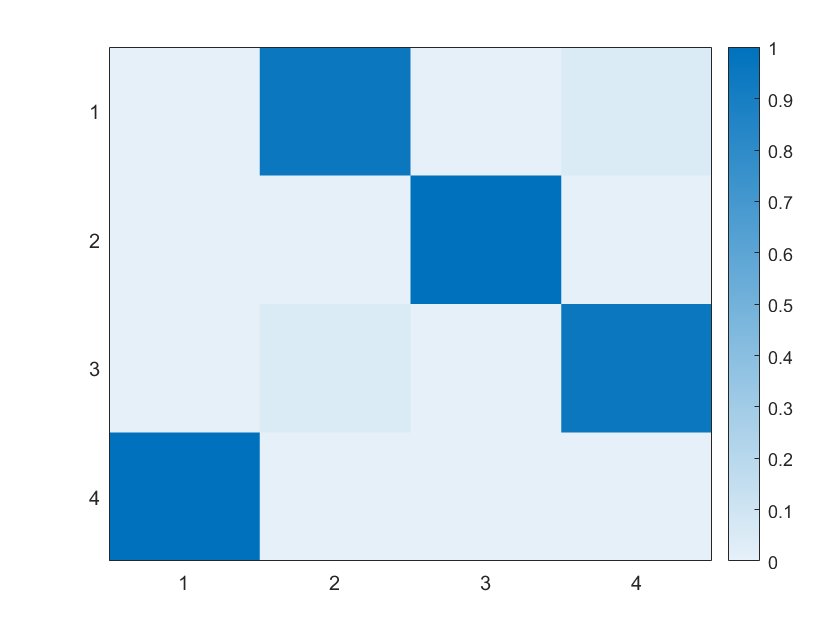}\\
\small $\theta=10$
\end{minipage}
\caption{Visualization of $S_{\mathbb{U}}^\theta$ under different values of $\theta$. The color of each cell represents the matrix entry, with darker shades indicating larger values \cite{Shen2025NeurIPS}.}
\label{fig:fra-theta-path}
\end{figure}
The limiting behavior is precisely characterized:

\begin{theorem}[Limit of FRA \cite{Shen2025NeurIPS}]
\label{thm:fra_limit}
Let $S \in \R_+^{n \times n}$ and let $\mathcal{F}_S$ be the convex hull of optimal permutation matrices for the linear assignment problem with score matrix $S$. Then:
\begin{itemize}
\item As $\theta \to 0$: $S_{\mathbb{U}}^\theta \to \frac{1}{n}\mathbf{1}_n\mathbf{1}_n^\top$ (the center of $\mathbb{U}$).
\item As $\theta \to \infty$: $S_{\mathbb{U}}^\theta$ converges to the unique point in $\mathcal{F}_S$ with minimal Frobenius norm; if the optimal permutation is unique, this limit is exactly that permutation matrix.
\end{itemize}
\end{theorem}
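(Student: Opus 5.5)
The plan is to work with the two equivalent descriptions of $S^\theta_{\mathbb{U}}$: the Euclidean projection $\Gamma^{F}_{\mathbb{U}}(\tfrac{\theta}{2}S)$, used for $\theta\to 0$, and the maximizer of $\langle X,S\rangle-\tfrac1\theta\|X\|_F^2$ on the compact set $\mathbb{U}$, used for $\theta\to\infty$.

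\textbf{The limit $\theta\to 0$.} The projection $\Gamma^{F}_{\mathbb{U}}$ onto the closed convex set $\mathbb{U}$ is nonexpansive. Hence
\[
\Bigl\|S^\theta_{\mathbb{U}}-\Gamma^F_{\mathbb{U}}(0)\Bigr\|_F\le \frac{\theta}{2}\|S\|_F\to 0.
\]
It remains to identify $\Gamma^F_{\mathbb{U}}(0)$, the minimal-Frobenius-norm doubly stochastic matrix. By Cauchy--Schwarz, every $X\in\mathbb{U}$ satisfies
\[
\|X\|_F^2\ge \frac{\bigl(\sum_{ij}X_{ij}\bigr)^2}{n^2}=1,
\]
with equality only when all entries are equal. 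So $\Gamma^F_{\mathbb{U}}(0)=\frac1n\mathbf{1}_n\mathbf{1}_n^\top$, which gives the first claim.

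\textbf{The limit $\theta\to\infty$.} This is a Tikhonov/viscosity-selection argument. Let $v^*=\max_{X\in\mathbb{U}}\langle X,S\rangle$. By Birkhoff--von Neumann, the optimal face $\{X\in\mathbb{U}:\langle X,S\rangle=v^*\}$ equals $\mathcal{F}_S$, the convex hull of the optimal permutations. Let $X^*$ be the minimal-norm point of $\mathcal{F}_S$; it is unique because $\mathcal{F}_S$ is compact and convex and $\|\cdot\|_F^2$ is strictly convex. Write $X_\theta=S^\theta_{\mathbb{U}}$ and compare optimality at $X_\theta$ and $X^*$:
\[
\langle X_\theta,S\rangle-\frac1\theta\|X_\theta\|_F^2\ \ge\ v^*-\frac1\theta\|X^*\|_F^2 .
\]
This yields two facts.
\begin{enumerate}
\item Since $\|X\|_F^2\le n$ on $\mathbb{U}$, we get $v^*-\langle X_\theta,S\rangle\le n/\theta\to 0$. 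Therefore every cluster point of $(X_\theta)$, which exists by compactness, lies in $\mathcal{F}_S$.
\item Since $\langle X_\theta,S\rangle\le v^*$, we get $\|X_\theta\|_F\le\|X^*\|_F$ for every $\theta$. Every cluster point $\bar X$ then satisfies $\|\bar X\|_F\le\|X^*\|_F$ with $\bar X\in\mathcal{F}_S$, so $\bar X=X^*$ by uniqueness of the minimal-norm point.
\end{enumerate}
Since all cluster points coincide, the whole family converges to $X^*$. If the optimal permutation $P^*$ is unique, then $\mathcal{F}_S=\{P^*\}$ and the limit is $P^*$.

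\textbf{Main obstacle.} The only delicate step is the selection argument in the $\theta\to\infty$ case. Two things must hold there: the comparison inequality has to give the norm bound $\|X_\theta\|_F\le\|X^*\|_F$ uniformly in $\theta$, and the optimal face of the relaxed problem has to be exactly $\mathcal{F}_S$. The latter follows because a face of $\mathbb{U}$ is the convex hull of the vertices it contains, and those vertices are precisely the optimal permutations. The nonnegativity assumption $S\in\R_+^{n\times n}$ plays no essential role in either limit.
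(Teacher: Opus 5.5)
\textbf{Verdict.} Your proof is correct and complete. The paper states this theorem without proof and defers to \cite{Shen2025NeurIPS}, so there is no in-text argument to compare yours with.

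\textbf{What checks out.} For $\theta\to 0$, nonexpansiveness of $\Gamma^F_{\mathbb{U}}$ together with the Cauchy--Schwarz identification of the minimal-norm point of $\mathbb{U}$ gives the limit, and it gives it with an explicit $O(\theta)$ rate. For $\theta\to\infty$, comparing with $X^*$ is the standard Tikhonov selection argument. The one structural input you need is that the optimal face equals $\mathcal{F}_S$, and you supply it correctly: a face of a polytope is the convex hull of the vertices it contains, combined with Birkhoff--von Neumann.

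\textbf{What your route gives beyond the statement.} First, your item 1 actually gives
$v^*-\langle X_\theta,S\rangle\le\frac1\theta\bigl(\|X^*\|_F^2-\|X_\theta\|_F^2\bigr)\le\frac{n-1}{\theta}$,
using $\|X_\theta\|_F^2\ge 1$ from your first part. Divided by $n$, this is the paper's normalized error bound, slightly sharpened, so you prove that companion claim as well.

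Second, the argument uses nothing about $\|\cdot\|_F^2$ beyond continuity and strict convexity on $\mathbb{U}$. Replacing it by $\mathcal{H}$ shows that $S^\beta_{\mathbb{U}}$ converges to the maximum-entropy point of $\mathcal{F}_S$ (cf.\ \cite{Cominetti1994}). It also reproduces $\Delta^S_\beta\le\log n/\beta$. This makes precise the paper's remark that the two paths may select different points of the same optimal face.

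\textbf{What a compactness argument cannot give.} The Frobenius path actually reaches its limit at finite $\theta$. For a linear objective over a polytope, quadratic regularization is exact (Mangasarian--Meyer), so $S^\theta_{\mathbb{U}}=X^*$ for all $\theta$ beyond some finite threshold. For example, with $n=2$ and $S=I$ one finds $S^\theta_{\mathbb{U}}=I$ for every $\theta\ge 2$. Proving this needs a KKT/Lagrange-multiplier argument rather than cluster points. It has no entropic counterpart, since entropic couplings stay strictly positive, and it is not needed for the statement as given.
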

\noindent This theorem reveals a subtle but important distinction from the entropy path: when multiple optimal permutations exist, the Frobenius limit selects the unique minimum-Frobenius-norm point of the optimal face. The two paths may converge to different points on the same optimal face.

The normalized assignment error satisfies an analogous bound:
\begin{equation}
 \frac{1}{n}\bigl(\langle S_{\mathbb{U}}^\infty, S \rangle - \langle S_{\mathbb{U}}^\theta, S \rangle\bigr) \leq \frac{1}{\theta},
\end{equation}
showing that $\theta$ simultaneously controls directional sharpness and regularization bias.

\subsection{Scaling Doubly Stochastic Normalization (SDSN)}

The Euclidean projection onto $\mathbb{U}$ has no simple closed-form expression, but it can be approximated efficiently by {alternating projections} between two convex sets:
\begin{enumerate}[leftmargin=2em]
\item $\mathbb{A} = \{Y \in \R^{n \times n} : Y\mathbf{1}_n = \mathbf{1}_n,\, Y^\top\mathbf{1}_n = \mathbf{1}_n\}$ (affine subspace of row/column sum constraints);
\item $\R_+^{n \times n}$ (nonnegative orthant).
\end{enumerate}

The SDSN iterates:
\begin{equation}
Y^{(k+1)} = \mathcal{P}_2 (\mathcal{P}_1(Y^{(k)})),
\end{equation}
where:
\begin{gather}
\mathcal{P}_1(Y)
=
Y+\left(
\frac{n+\mathbf{1}_n^\top Y\mathbf{1}_n}{n^2}I_n
-\frac{1}{n}Y
\right)
\mathbf{1}_n\mathbf{1}_n^\top
-\frac{1}{n}\mathbf{1}_n\mathbf{1}_n^\top Y,
\\
\mathcal{P}_2(Y)
=
[Y]_+
=
\frac{Y+|Y|}{2}.
\end{gather}

\noindent Here $\mathcal P_1$ is the Frobenius projection onto the affine subspace $\mathbb A$, and $\mathcal P_2$ is the Frobenius projection onto the nonnegative orthant $\R_+^{n\times n}$. The convergence of this alternating projection scheme follows from the classical theory of convex feasibility \cite{Bauschke1996}.

Three theoretical properties make SDSN particularly attractive for large-scale computation:

\begin{enumerate}[leftmargin=2em]
\item \textbf{Linear convergence}: For input $S$ and error tolerance $\eta$, the number of iterations needed is
\begin{equation}
k = \left\lceil \frac{\ln\bigl(\eta / [\theta(\|S\|_F + n)]\bigr)}{\ln c} \right\rceil,
\end{equation}
where $c \in (0,1)$ is the linear convergence factor. Larger $\theta$ (sharper projections) require more iterations.

\item \textbf{Dimension-normalized stopping criterion}: The quantity $\zeta(Y^{(k)}) = \frac{1}{n}\mathbf{1}_n^\top Y^{(k)}\mathbf{1}_n - 1$ measures total mass deviation after the nonnegativity projection. It provides a cheap, scale-invariant proxy for feasibility error.

\item \textbf{Error damping}: Low-precision arithmetic introduces rounding or truncation errors at each step. Although $\mathcal{P}_1$ is an affine projection, it acts linearly on these perturbations, mapping them into the subspace of matrices with zero row and column sums. Under the contraction properties of the subsequent SDSN iterations, previously introduced errors are progressively attenuated, enabling mixed-precision acceleration with a controlled loss of accuracy \cite{Shen2025NeurIPS}.
\end{enumerate}

\section{Graph Matching: Two Geometries, One Nonconvex Outer Problem}

\subsection{The Koopmans--Beckmann Formulation}

Graph matching seeks to align two undirected graphs $G = (V, A)$ and $\tilde{G} = (\tilde{V}, \tilde{A})$ with adjacency matrices $A, \tilde{A} \in \R^{n \times n}$ and vertex similarity matrix $Q \in \R^{n \times n}$. The classical Koopmans--Beckmann formulation \cite{Beckmann1957, Lawler1963} is:
\begin{equation}
\max_{M \in \Pi} \Phi(M) = \frac{1}{2}\tr(M^\top A M \tilde{A}) + \lambda \tr(M^\top Q),
\label{eq.Object}
\end{equation}
where $\Pi$ is the set of permutation matrices. This is the quadratic assignment problem (QAP), which is NP-hard \cite{garey1979computers}.

The standard relaxation replaces the set of permutation matrices with the Birkhoff polytope \(\mathbb{U}\) of doubly stochastic matrices. Although this relaxation makes the feasible set convex, the resulting maximization problem remains nonconvex because the objective is generally nonconcave. A natural approach is the relaxed gradient-projection method \cite{Lu2016}:

\begin{equation}
M^{(t+1)} = (1-\alpha_t)M^{(t)} + \alpha_t D^{(t)},
\end{equation}
\begin{equation}
 D^{(t)} = \Gamma^F_{\mathbb{U}}(\nabla \Phi(M^{(t)})) = \Gamma^F_{\mathbb{U}}(A M^{(t)} \tilde{A} + \lambda Q).
\end{equation}
This alternates between taking a step along the gradient of the quadratic objective and projecting the resulting point onto \(\mathbb{U}\).

\subsection{The Hidden Regularization in Projection Gradient Steps}

The solution to the quadratic assignment problem~\eqref{eq.Object} also maximizes \( w \Phi(M) \), where \( w \) is a positive scaling constant:
\begin{equation}
    \arg \max_{M \in \Pi} w\Phi(M) = \arg \max_{M \in \Pi} \Phi(M)
\end{equation}
However, the doubly stochastic projection \(\Gamma^F_{\mathbb{U}}(\cdot)\) fails to preserve this scale-invariant property:
\begin{equation}
\Gamma^F_{\mathbb{U}}(S) \neq \Gamma^F_{\mathbb{U}}(wS), \quad \text{for } S \in \mathbb{R}^{n \times n}_+.
\end{equation}
Since the objective function is non-convex, this sensitivity can cause the projection-based algorithm to converge to different points under different scalings.

To understand the precise mechanism behind this scale dependence, we examine how the projection operator behaves under a scaled input. The following lemma provides the key characterization.
\begin{lemma}[Projection as Implicit Regularized Assignment \cite{Shen2025NeurIPS}]
\label{lem:projection}
For any scale factor $w > 0$ and score matrix $S$:
\begin{equation}
\Gamma^{F}_{\mathbb{U}}(wS) = \argmin_{X \in \mathbb{U}} \|X - wS\|_F^2 = \arg \max_{X \in \mathbb{U}}\   \langle X, S \rangle - \frac{1}{2w}\|X\|_F^2 .
\end{equation}
\end{lemma}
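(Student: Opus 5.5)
The plan is to prove the identity by expanding the squared Frobenius norm and comparing the two objectives over $\mathbb{U}$. The first equality is just the definition of $\Gamma^{F}_{\mathbb{U}}$ as the Euclidean projection. This projection is well defined because $\mathbb{U}$ is nonempty, closed and convex, and $X\mapsto\|X-wS\|_F^2$ is strictly convex, so the minimizer is unique. That leaves the second equality.

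For every $X \in \mathbb{U}$, I would expand
\[
\|X - wS\|_F^2 = \|X\|_F^2 - 2w\langle X, S\rangle + w^2\|S\|_F^2 .
\]
The last term does not depend on $X$, so it cannot affect the minimizer. Since $w>0$, I would then divide by $-2w$. This turns the minimization into a maximization of
\[
\langle X, S\rangle - \frac{1}{2w}\|X\|_F^2 - \frac{w}{2}\|S\|_F^2 ,
\]
whose last term is again constant in $X$. Because adding a constant and multiplying by a positive scalar preserve the set of optimizers, the argmin of $\|X-wS\|_F^2$ over $\mathbb{U}$ equals the argmax of $\langle X,S\rangle - \frac{1}{2w}\|X\|_F^2$ over $\mathbb{U}$. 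This gives the stated identity.

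Finally, I would check that the argmax is a single point, so the equality holds between points and not just between sets. The map $X\mapsto \frac{1}{2w}\|X\|_F^2$ is strictly convex, so by the Well-definedness remark the maximizer over the compact convex set $\mathbb{U}$ exists and is unique. In the notation of the FRA subsection, the right-hand side is $S^{\theta}_{\mathbb{U}}$ with $\theta = 2w$. This matches the completing-the-square identity $S^\theta_{\mathbb{U}}=\Gamma^F_{\mathbb{U}}(\theta S/2)$ stated there, which serves as a consistency check.

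There is no real obstacle in this argument. The only point needing care is that the sign flip from minimization to maximization uses $w>0$; for $w<0$ the regularizer would change sign and the correspondence would fail.
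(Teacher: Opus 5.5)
Your proof is correct and follows the paper's route. The paper justifies this identity (stated in the FRA subsection with $\theta = 2w$) only by ``completing the square,'' which is exactly your step: expand $\|X-wS\|_F^2$, discard the $X$-independent term $w^2\|S\|_F^2$, and rescale by $-1/(2w)$ using $w>0$.
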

This reveals that the projection step is exactly a Frobenius-regularized assignment problem with parameter $\theta = 2w$. The scale factor $w$ controls the tradeoff between aligning with the gradient (high assignment score) and maintaining a diffuse solution close to the barycenter (low Frobenius norm). 

If the gradient magnitude changes during optimization, the relative balance between the assignment score and the Frobenius regularization also changes, causing the effective regularization parameter to vary implicitly across iterations. This scale dependence may alter the sharpness of the projected solution and distort the resulting optimization trajectory. To decouple the regularization strength from the gradient magnitude, we normalize the gradient before projection and treat \(\theta\) as an explicit, controllable algorithmic parameter.

\subsection{FRAM: The Frobenius Route}
Frobenius-Regularized Assignment Matching (FRAM) \cite{Shen2025NeurIPS} implements this insight explicitly:
\begin{enumerate}[leftmargin=2em]
\item Normalize the gradient: $\hat{S}^{(t)} = \nabla \Phi(M^{(t)})/{\max\limits_{i,j}  \left[\nabla \Phi(M^{(t)})\right]_{ij}}$;
\item Approximately compute the FRA feasible target: $D^{(t)} = \Gamma^{F}_{\mathbb{U}}(\frac{\theta}{2}\hat{S}^{(t)})$ via SDSN;
\item Projected-gradient update: $M^{(t+1)} = (1-\alpha_t)M^{(t)} + \alpha_t D^{(t)}$.
\end{enumerate}

The normalization ensures that the sharpness of the FRA solution is controlled by $\theta$, not by gradient magnitude fluctuations. The per-iteration cost is dominated by the matrix multiplication $A M^{(t)} \tilde{A}$ (for gradient computation) and the SDSN iterations (for FRA projection). The algorithm incurs a time complexity of $O(n^3 + \ell n^2)$ and a space complexity of $O(n^2)$, where $\ell$ is the number of SDSN inner iterations.

A key practical advantage is {mixed-precision acceleration}: exploiting the error-damping property of SDSN, FRAM can compute the gradient and the inner projection in low-precision (e.g., TF32 and FP32) while maintaining FP64 accuracy in the outer iterations. On benchmark problems, this yields speedups of up to $370\times$ relative to CPU FP64 implementations (shown in Figure \ref{fig:mix}), with accuracy degradation below 0.2\% \cite{Shen2025NeurIPS}.

\begin{figure}[htbp]
\centering
\includegraphics[width=0.9\linewidth]{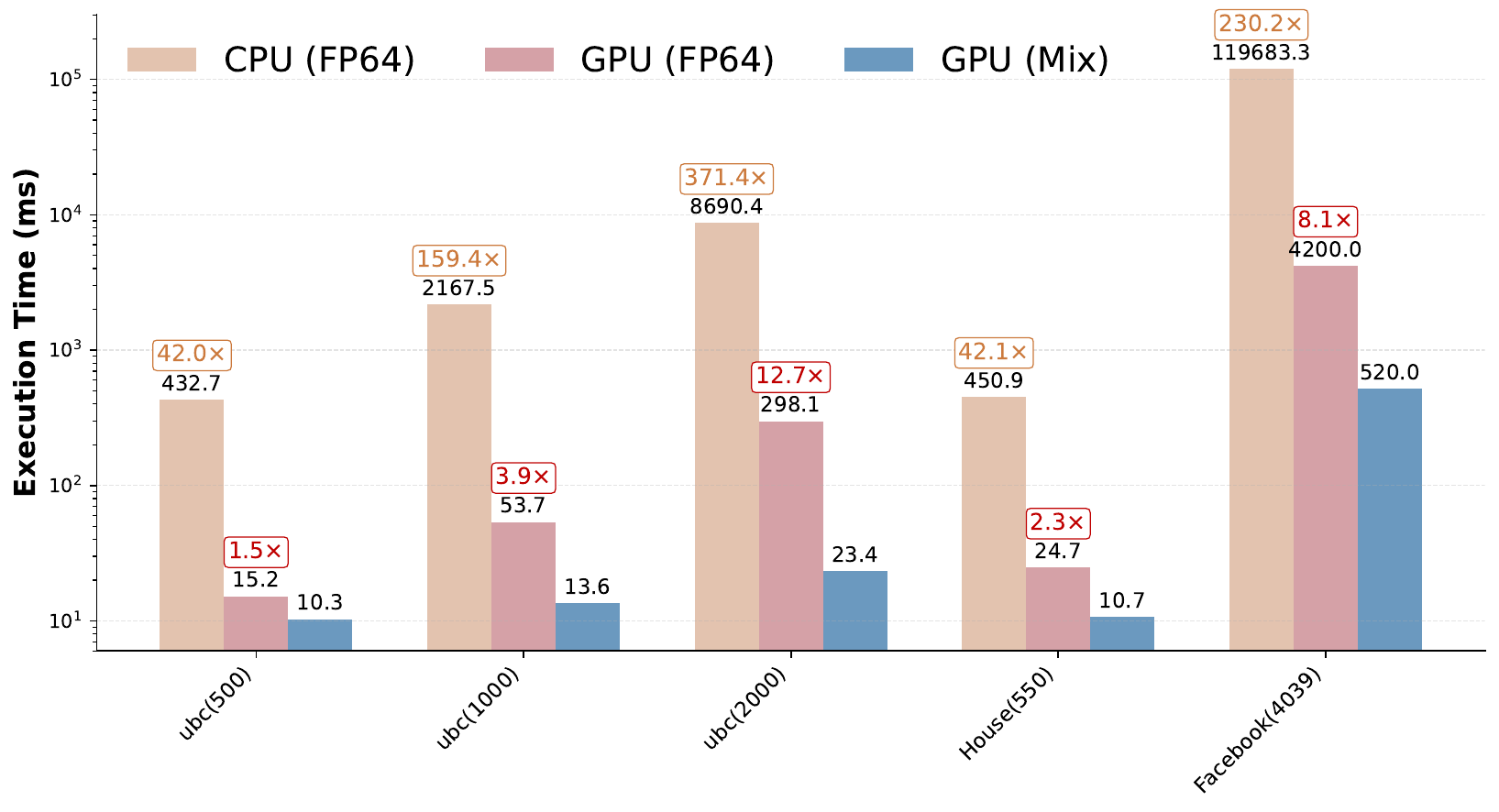}
\caption{Runtime comparison of FRAM under different precisions and processors. The vertical axis shows the runtime (log scale), and the horizontal axis lists the datasets and their sizes. The numbers in red boxes indicate the speedup of GPU mixed precision over GPU double precision, while those in yellow boxes indicate the speedup over CPU double precision.}
\label{fig:mix}
\end{figure}

\subsection{CSGO and ASM: The Entropy Route}
\label{sec:asm}

The entropy route follows the same projected-gradient update but with softassign directions:
\begin{equation}
D^{(t)} = S^{\beta_t}_{\mathbb{U}} = \Gamma^{\KL}_{\mathbb{U}}\bigl(\expo(\beta_t \hat{S}^{(t)})\bigr).
\end{equation}

\textbf{CSGO} (Constrained-Softassign Gradient Optimization) \cite{Shen2026CSGO} establishes the $\beta_t = \gamma_t \log n$ parameterization, ensuring that the normalized assignment error bound $1/\gamma_t$ is independent of problem size.

\textbf{ASM} (Adaptive Softassign Matching) \cite{Shen2024CVPR} goes further by introducing an adaptive annealing criterion. Rather than fixing a temperature schedule, ASM monitors the marginal benefit of increasing $\beta$:
\begin{equation}
\beta_\eta(S) = \min\left\{ \beta \geq \beta_0 : \|S^{\beta+\Delta\beta}_{\mathbb{U}} - S^\beta_{\mathbb{U}}\|_1 \leq \eta \right\}, \quad \Delta\beta = \log n.
\end{equation}

The stopping condition checks whether the directional change justifies the computational cost of a sharper coupling. Crucially, ASM leverages the transition theorem to evaluate candidate temperatures without restarting Sinkhorn iterations:
\begin{equation}
S^{\beta_2}_{\mathbb{U}} = \Gamma^{\KL}_{\mathbb{U}}\Bigl((S^{\beta_1}_{\mathbb{U}})^{\odot(\beta_2/\beta_1)}\Bigr).
\end{equation}

This transition recursively initializes each candidate coupling from the previous one. Although each transition still requires a Hadamard power followed by Sinkhorn matrix scaling, it can substantially reduce the number and cost of Sinkhorn iterations required during adaptive exploration. The paper \cite{Shen2024CVPR} reports that, in experiments on random matrices with \(n=2000\), this recursive computation reduced the computational cost by approximately \(85\%\).

Figure~\ref{fig:performance} demonstrates the representative performance of ASM in a large-scale graph matching task, yeast Protein-Protein Interaction (PPI) network alignment.
\begin{figure}[h]
\centering
\includegraphics[width=0.75\linewidth]{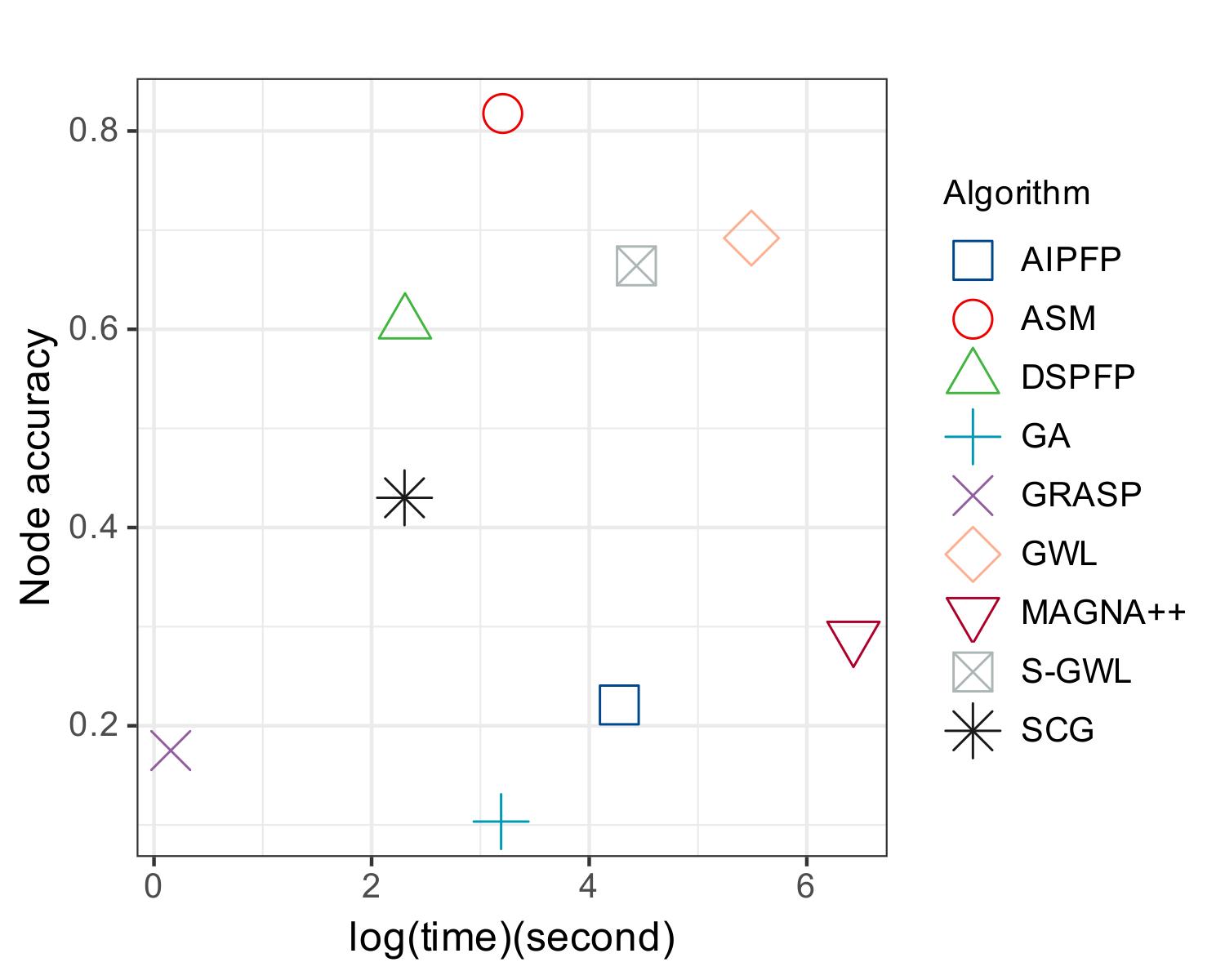}
\caption{Matching accuracy versus runtime on the yeast PPI network (1,004 proteins, 4,920 high-confidence interactions) \cite{Shen2024CVPR}.}
\label{fig:performance}
\end{figure}
\section{Optimal Transport: From Direction to Destination}

In graph matching, the regularized coupling map generates a {search direction} within a projected-gradient iteration; the final output is a discrete matching, and the coupling is merely an intermediate computational object. In optimal transport, the coupling is the final answer---it must satisfy the marginal constraints exactly, and its quality is measured by how well it approximates the true transport plan.

This difference in computational roles leads to different numerical requirements. Graph-matching algorithms may tolerate moderately approximate intermediate couplings or search directions, provided that they still produce effective progress toward a high-quality discrete matching. Optimal-transport solvers, however, generally require substantially tighter marginal feasibility and numerical accuracy in the final coupling.

\subsection{IP-EOT: Inexact Proximal Entropic Optimal Transport}
\label{sec:ipeot}

For large regularization parameter $\beta$, directly computing $\expo(\beta S)$ may cause overflow or numerical instability. The {inexact proximal entropic optimal transport} (IP-EOT) method \cite{Shen2026OpenReview} addresses this by {proximal continuation}: decomposing a large $\beta$ into smaller increments $\Delta\beta_t$ and approximately solving a sequence of proximal subproblems.

The proximal point method \cite{xie2020fast} approximates the solution of an optimal transport problem by solving a sequence of simpler entropic optimal transport problems:
\begin{equation}
P^{(t+1)} = \arg\max_{P \in \mathbb{U}^\mathbf{p}_\mathbf{q}} \langle S, P \rangle - \frac{1}{\Delta\beta_t} D_{{KL}}(P \| P^{(t)}).
\end{equation}
Here, the KL term serves as a proximal regularizer that keeps the new coupling $P^{(t+1)}$ close to the previous iterate $P^{(t)}$ in an
information-geometric sense, which prevents abrupt changes in the transport plan. This is equivalent to a Hadamard update followed by a KL projection:
\begin{equation}
P^{(t+1)} = \Gamma^{\KL}_{\mathbb{U}^\mathbf{p}_\mathbf{q}}\bigl(P^{(t)} \odot \expo(\Delta\beta_t S)\bigr).
\end{equation}
To reduce the computational cost, the authors \cite{xie2020fast} proposed an inexact version in which each projection $\Gamma^{\KL}_{\mathbb{U}^\mathbf{p}_\mathbf{q}}$ is performed using only a single Sinkhorn iteration, and proved that it still converges to the optimal solution. However, the proximal point method has not been applicable to entropic optimal transport because the trajectory of $\beta$ during the iterations remains unclear. Our work seeks to address this issue in the following proposition.

\begin{proposition}[Trajectory of the regularization parameter \cite{Shen2026OpenReview}]
\label{prop:same-kernel}
Let $\beta_t = \sum_{s=0}^{t-1} \Delta\beta_s$ and initialize $P^{(0)}$ as $\mathbf{1}_m \mathbf{1}_n^\top$. If each proximal step is solved exactly, then:
\begin{equation}
P^{(t)} = \Gamma^{\KL}_{\mathbb{U}^\mathbf{p}_\mathbf{q}}\bigl(P^{(0)} \odot \expo(\beta_t S)\bigr) = \Gamma^{\KL}_{\mathbb{U}^{\mathbf{p}}_{\mathbf{q}}}\bigl(\expo(\beta_t S)\bigr).
\end{equation}
\end{proposition}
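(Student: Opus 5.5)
The plan is an induction on $t$, using the KL-projection composition lemma and the invariance lemma from Section~3.1. First I will check that the proximal subproblem and the Hadamard-then-project form agree. For $P^{(t)}\in\mathbb{R}^{m\times n}_{++}$, expanding $D_{KL}(P\|P^{(t)})$ gives
\[
\langle S,P\rangle-\tfrac{1}{\Delta\beta_t}D_{KL}(P\|P^{(t)}) = -\tfrac{1}{\Delta\beta_t}D_{KL}\bigl(P\,\|\,P^{(t)}\odot\expo(\Delta\beta_t S)\bigr)+\text{const},
\]
where the constant does not depend on $P$. This holds because $\sum_{ij}P_{ij}$ is fixed on $\mathbb{U}^\mathbf{p}_\mathbf{q}$, so the linear mass terms are absorbed. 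Hence $P^{(t+1)}=\Gamma^{KL}_{\mathbb{U}^\mathbf{p}_\mathbf{q}}(P^{(t)}\odot\expo(\Delta\beta_t S))$, which is the update stated in the paper.

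The second equality in the proposition is immediate, since $P^{(0)}=\mathbf{1}_m\mathbf{1}_n^\top$ is the identity for the Hadamard product. The base case $t=1$ is then exactly the update formula with $\beta_1=\Delta\beta_0$.

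For the inductive step, assume $P^{(t)}=\Gamma^{KL}_{\mathbb{U}^\mathbf{p}_\mathbf{q}}(\expo(\beta_t S))$. Then
\[
P^{(t+1)}=\Gamma^{KL}_{\mathbb{U}^\mathbf{p}_\mathbf{q}}\Bigl(\Gamma^{KL}_{\mathbb{U}^\mathbf{p}_\mathbf{q}}(\expo(\beta_t S))\odot\expo(\Delta\beta_t S)\Bigr).
\]
I apply the composition lemma with $K_1=\expo(\beta_t S)$ and $K_2=\expo(\Delta\beta_t S)$, both strictly positive. The expression becomes $\Gamma^{KL}_{\mathbb{U}^\mathbf{p}_\mathbf{q}}(\expo(\beta_t S)\odot\expo(\Delta\beta_t S))=\Gamma^{KL}_{\mathbb{U}^\mathbf{p}_\mathbf{q}}(\expo(\beta_{t+1}S))$, since $\beta_t+\Delta\beta_t=\beta_{t+1}$. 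The composition lemma itself rests on the invariance lemma: $\Gamma^{KL}(K_1)=D(\mathbf{u})K_1D(\mathbf{v})$ and diagonal scalings commute with Hadamard products, i.e. $(D(\mathbf{u})K_1D(\mathbf{v}))\odot K_2=D(\mathbf{u})(K_1\odot K_2)D(\mathbf{v})$.

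The main obstacle is a positivity/support issue. The lemmas require strictly positive kernels, and the Sinkhorn form $D(\mathbf{u})K D(\mathbf{v})$ requires positive scalings. I will argue that if $\mathbf{p},\mathbf{q}$ are strictly positive, the KL projection of a positive kernel is strictly positive. It has the diagonal scaling form with positive $\mathbf{u},\mathbf{v}$, by the Sinkhorn--Knopp theory or by the KKT conditions, since the entropy gradient blows up at $0$. This keeps every $P^{(t)}$ in $\mathbb{R}^{m\times n}_{++}$, so both the expansion in the first step and the composition lemma apply at every step. If some marginal entries vanish, I would first restrict to the support rows and columns, where the same argument applies.
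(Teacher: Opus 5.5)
Your proposal is correct and is essentially the paper's argument: the paper justifies the proposition by factoring $\expo(\beta_{t+1}S)=\expo(\beta_t S)\odot\expo(\Delta\beta_t S)$ and noting that the diagonal scalings produced by each KL projection are absorbed by the next projection through the invariance and composition lemmas, which is exactly your induction and the same computation the paper uses for the temperature transition theorem. Starting the induction at $t=1$ is also the right choice, since at $t=0$ (where $\beta_0=0$) the formula would claim $P^{(0)}=\Gamma^{\KL}_{\mathbb{U}^\mathbf{p}_\mathbf{q}}(\mathbf{1}_m\mathbf{1}_n^\top)$, which is false in general.
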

\noindent The key insight is that proximal decomposition does {not} alter the score matrix $S$; it merely decomposes the Gibbs kernel into cumulative Hadamard factors. Each KL projection introduces diagonal scalings that are absorbed by the next projection, preserving the same-kernel condition. This proposition enables the proximal point method to be applied to entropic optimal transport.

In practice, each stage is solved inexactly with limited Sinkhorn iterations, which yields the Inexact Proximal point method for Entropic Optimal Transport (IP-EOT) \cite{Shen2026OpenReview}. The following theorem guarantees that exactness can be restored:

\begin{theorem}[Inexact-to-exact transition \cite{Shen2026OpenReview}]
\label{thm:inexact-exact}
Let 
$$\hat{S}^\beta = \diag(\hat{\mathbf{u}}) \expo(\beta S) \diag(\hat{\mathbf{v}})$$ 
be any positive diagonal scaling of the Gibbs kernel. Then a single exact KL projection recovers the target solution:
\begin{equation}
\Gamma^{\KL}_{\mathbb{U}^\mathbf{p}_\mathbf{q}}(\hat{S}^\beta) = \Gamma^{\KL}_{\mathbb{U}^\mathbf{p}_\mathbf{q}}\bigl(\expo(\beta S)\bigr) = S^\beta_{\mathbb{U}^\mathbf{p}_\mathbf{q}}.
\end{equation}
\end{theorem}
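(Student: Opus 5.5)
This is essentially a direct application of the KL-projection invariance lemma under diagonal scaling, applied to the kernel $K=\expo(\beta S)$. The identification with $S^\beta_{\mathbb{U}^\mathbf{p}_\mathbf{q}}$ then comes from the solution form of entropic optimal transport stated in Section 3.1.

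First, I will check the hypotheses of the invariance lemma. The kernel $K=\expo(\beta S)$ has entries $e^{\beta S_{ij}}>0$, so $K\in\R^{m\times n}_{++}$ for any real score matrix $S$ and any $\beta>0$. The scaling vectors $\hat{\mathbf{u}}\in\R^m_{++}$ and $\hat{\mathbf{v}}\in\R^n_{++}$ are positive by assumption. The marginals $\mathbf{p},\mathbf{q}$ are fixed with equal total mass, so $\mathbb{U}^\mathbf{p}_\mathbf{q}$ is nonempty. Second, with $\mathbf{r}=\hat{\mathbf{u}}$ and $\mathbf{c}=\hat{\mathbf{v}}$, the lemma gives
\[
\Gamma^{\KL}_{\mathbb{U}^\mathbf{p}_\mathbf{q}}\bigl(\diag(\hat{\mathbf{u}})K\diag(\hat{\mathbf{v}})\bigr)=\Gamma^{\KL}_{\mathbb{U}^\mathbf{p}_\mathbf{q}}(K).
\]
The mechanism is that, for $P\in\mathbb{U}^\mathbf{p}_\mathbf{q}$,
\[
D_{KL}\bigl(P\,\|\,\diag(\hat{\mathbf{u}})K\diag(\hat{\mathbf{v}})\bigr)=D_{KL}(P\|K)-\sum_i p_i\log\hat u_i-\sum_j q_j\log\hat v_j+\text{const},
\]
so the two objectives differ by a $P$-independent constant and have the same minimizer. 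Third, the equality $\Gamma^{\KL}_{\mathbb{U}^\mathbf{p}_\mathbf{q}}(\expo(\beta S))=S^\beta_{\mathbb{U}^\mathbf{p}_\mathbf{q}}$ is the solution-form identity for the entropic problem \eqref{eq.eot}.

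The only point that needs care is that the KL projection is well defined and single-valued, so that ``the minimizer'' makes sense. Strict convexity of $P\mapsto\sum_{ij}P_{ij}\log P_{ij}$ on the compact convex set $\mathbb{U}^\mathbf{p}_\mathbf{q}$ gives existence and uniqueness, as in the well-definedness remark. If I also wanted to verify the solution-form identity independently, I would expand
\[
D_{KL}(P\|\expo(\beta S))=\mathcal{H}(P)-\beta\langle P,S\rangle+\text{const},
\]
using that $\sum_{ij}P_{ij}$ is fixed on $\mathbb{U}^\mathbf{p}_\mathbf{q}$. Minimizing this is the same as maximizing $\langle P,S\rangle-\frac1\beta\mathcal{H}(P)$. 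I do not expect a genuine obstacle here. The practical point is that the result holds however inaccurate the intermediate scalings $\hat{\mathbf{u}},\hat{\mathbf{v}}$ produced by truncated Sinkhorn iterations are, because they only need to be positive.
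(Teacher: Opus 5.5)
Your proposal is correct and follows the paper's route: the theorem is an immediate instance of the diagonal-scaling invariance lemma with $K=\expo(\beta S)$, $\mathbf{r}=\hat{\mathbf{u}}$, $\mathbf{c}=\hat{\mathbf{v}}$, combined with the solution-form identity $S^\beta_{\mathbb{U}^\mathbf{p}_\mathbf{q}}=\Gamma^{\KL}_{\mathbb{U}^\mathbf{p}_\mathbf{q}}(\expo(\beta S))$; your expansion of the generalized KL divergence into $D_{KL}(P\|K)$ plus a $P$-independent constant is exactly the paper's proof of that lemma. Your extra checks are sound and make the argument more self-contained: positivity of the Gibbs kernel, well-definedness of the projection, and the direct verification of the solution form.
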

\noindent This theorem is the mathematical foundation for IP-EOT as a {warm-start strategy}: the proximal stages need not be solved to high precision; they need only preserve the same-kernel structure, after which a single exact projection yields the final answer.     
\begin{figure}[h]
\centering
\includegraphics[width=0.8\textwidth]{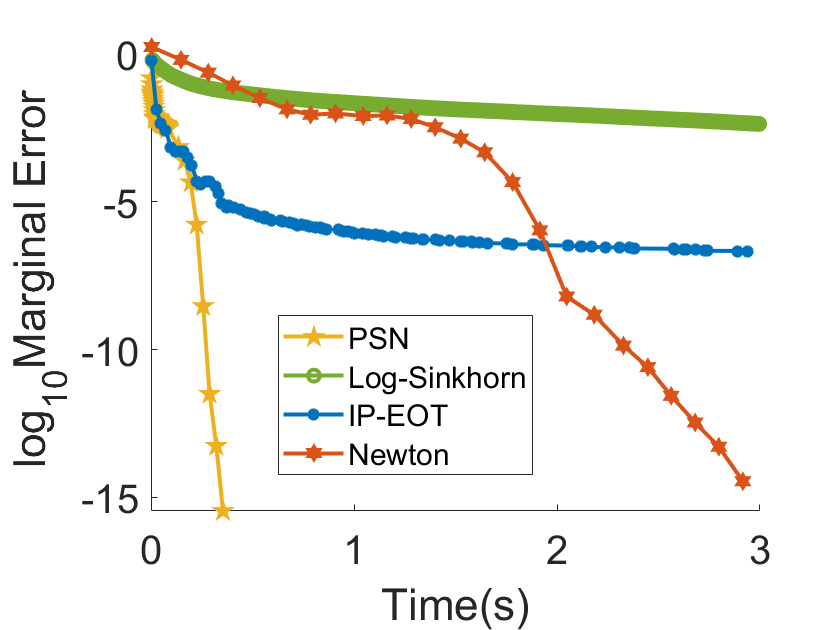}
\caption{Performance of IP-EOT and PSN.}
\label{fig.psn1}
\end{figure}
Figure~\ref{fig.psn1} illustrates the role of this proximal warm-start via marginal residual curves under the given experimental setup. The proximal path first provides an approximation that satisfies the same-kernel condition, causing the marginal residuals to decrease in the early stage; if a tighter tolerance is required, an exact KL projection such as Newton's method or Sinkhorn scaling is still needed afterward.

\subsection{PSN: Proximal--Sinkhorn--Newton Solver}

The {Proximal--Sinkhorn--Newton} (PSN) solver \cite{Shen2026OpenReview} orchestrates three algorithmic components:
\begin{enumerate}[leftmargin=2em]
\item \textbf{IP-EOT warm-start}: Construct a same-kernel approximation via proximal continuation.
\item \textbf{Sinkhorn iteration}: For moderate $\beta$, continue Sinkhorn scaling to drive marginal residuals below tolerance.
\item \textbf{Newton refinement}: For large $\beta$ where the coupling is nearly sparse, switch to Newton's method on the marginal equations for local quadratic convergence.
\end{enumerate}
Figure~\ref{fig:psn-architecture} illustrates the regularization strength scheduling logic in this workflow.
\begin{figure}[h]
\tikzset{every picture/.style={line width=0.4pt}} 

\begin{tikzpicture}[x=0.75pt,y=0.75pt,yscale=-1,scale=0.8]

\draw [color={rgb, 255:red, 208; green, 2; blue, 27 }  ,draw opacity=1 ][line width=3]    (136.1,88.14) -- (177.97,36.04) ;
\draw [shift={(181.1,32.14)}, rotate = 128.78] [color={rgb, 255:red, 208; green, 2; blue, 27 }  ,draw opacity=1 ][line width=3]    (14.54,-4.38) .. controls (9.25,-1.86) and (4.4,-0.4) .. (0,0) .. controls (4.4,0.4) and (9.25,1.86) .. (14.54,4.38)   ;
\draw [color={rgb, 255:red, 74; green, 144; blue, 226 }  ,draw opacity=1 ][line width=3]    (185.5,37.5) -- (185.85,97.93) ;
\draw [shift={(185.88,102.93)}, rotate = 269.67] [color={rgb, 255:red, 74; green, 144; blue, 226 }  ,draw opacity=1 ][line width=3]    (14.54,-4.38) .. controls (9.25,-1.86) and (4.4,-0.4) .. (0,0) .. controls (4.4,0.4) and (9.25,1.86) .. (14.54,4.38)   ;
\draw  [color={rgb, 255:red, 245; green, 166; blue, 35 }  ,draw opacity=1 ][line width=6] [line join = round][line cap = round] (103.91,109.02) .. controls (108.54,108.53) and (115,121.91) .. (118.69,125.5) .. controls (129.74,136.22) and (139.8,146.5) .. (151.99,155.67) .. controls (187.96,182.75) and (239.57,198.14) .. (281.37,201.09) .. controls (325.69,204.21) and (417.59,210.01) .. (457.67,194.79) .. controls (476.91,187.48) and (497.82,177.51) .. (516.24,168.28) .. controls (521.55,165.63) and (526.37,160.06) .. (531.17,157.12) .. controls (533.58,155.65) and (536.85,156.05) .. (538.88,153.92) .. controls (541.54,151.13) and (543.51,146.23) .. (547.09,145.86) ;
\draw  [color={rgb, 255:red, 65; green, 117; blue, 5 }  ,draw opacity=1 ][fill={rgb, 255:red, 65; green, 117; blue, 5 }  ,fill opacity=1 ] (515.33,165.65) .. controls (514.08,162.4) and (519.68,157.23) .. (527.82,154.09) .. controls (535.97,150.95) and (543.59,151.03) .. (544.84,154.27) .. controls (546.09,157.51) and (540.5,162.69) .. (532.35,165.83) .. controls (524.2,168.97) and (516.59,168.89) .. (515.33,165.65) -- cycle ;
\draw  [color={rgb, 255:red, 0; green, 0; blue, 0 }  ,draw opacity=1 ][fill={rgb, 255:red, 0; green, 0; blue, 0 }  ,fill opacity=1 ] (126.6,93.64) .. controls (126.6,90.6) and (129.06,88.14) .. (132.1,88.14) .. controls (135.14,88.14) and (137.6,90.6) .. (137.6,93.64) .. controls (137.6,96.68) and (135.14,99.14) .. (132.1,99.14) .. controls (129.06,99.14) and (126.6,96.68) .. (126.6,93.64) -- cycle ;
\draw [color={rgb, 255:red, 155; green, 155; blue, 155 }  ,draw opacity=1 ]   (138.77,208.15) -- (151.07,175.02) ;
\draw [shift={(151.77,173.15)}, rotate = 110.38] [color={rgb, 255:red, 155; green, 155; blue, 155 }  ,draw opacity=1 ][line width=0.75]    (19.67,-5.92) .. controls (12.51,-2.51) and (5.95,-0.54) .. (0,0) .. controls (5.95,0.54) and (12.51,2.51) .. (19.67,5.92)   ;
\draw [color={rgb, 255:red, 155; green, 155; blue, 155 }  ,draw opacity=1 ]   (562,190) -- (547,170.72) ;
\draw [shift={(545.77,169.15)}, rotate = 52.1] [color={rgb, 255:red, 155; green, 155; blue, 155 }  ,draw opacity=1 ][line width=0.75]    (19.67,-5.92) .. controls (12.51,-2.51) and (5.95,-0.54) .. (0,0) .. controls (5.95,0.54) and (12.51,2.51) .. (19.67,5.92)   ;
\draw [color={rgb, 255:red, 208; green, 2; blue, 27 }  ,draw opacity=1 ][line width=3]    (191.1,102.14) -- (232.97,50.04) ;
\draw [shift={(236.1,46.14)}, rotate = 128.78] [color={rgb, 255:red, 208; green, 2; blue, 27 }  ,draw opacity=1 ][line width=3]    (14.54,-4.38) .. controls (9.25,-1.86) and (4.4,-0.4) .. (0,0) .. controls (4.4,0.4) and (9.25,1.86) .. (14.54,4.38)   ;
\draw [color={rgb, 255:red, 74; green, 144; blue, 226 }  ,draw opacity=1 ][line width=3]    (240.5,51.5) -- (240.85,111.93) ;
\draw [shift={(240.88,116.93)}, rotate = 269.67] [color={rgb, 255:red, 74; green, 144; blue, 226 }  ,draw opacity=1 ][line width=3]    (14.54,-4.38) .. controls (9.25,-1.86) and (4.4,-0.4) .. (0,0) .. controls (4.4,0.4) and (9.25,1.86) .. (14.54,4.38)   ;
\draw [color={rgb, 255:red, 208; green, 2; blue, 27 }  ,draw opacity=1 ][line width=3]    (245.1,116.14) -- (286.97,64.04) ;
\draw [shift={(290.1,60.14)}, rotate = 128.78] [color={rgb, 255:red, 208; green, 2; blue, 27 }  ,draw opacity=1 ][line width=3]    (14.54,-4.38) .. controls (9.25,-1.86) and (4.4,-0.4) .. (0,0) .. controls (4.4,0.4) and (9.25,1.86) .. (14.54,4.38)   ;
\draw [color={rgb, 255:red, 74; green, 144; blue, 226 }  ,draw opacity=1 ][line width=3]    (294.5,65.5) -- (294.85,125.93) ;
\draw [shift={(294.88,130.93)}, rotate = 269.67] [color={rgb, 255:red, 74; green, 144; blue, 226 }  ,draw opacity=1 ][line width=3]    (14.54,-4.38) .. controls (9.25,-1.86) and (4.4,-0.4) .. (0,0) .. controls (4.4,0.4) and (9.25,1.86) .. (14.54,4.38)   ;
\draw [color={rgb, 255:red, 208; green, 2; blue, 27 }  ,draw opacity=1 ][line width=3]    (299.1,129.14) -- (340.97,77.04) ;
\draw [shift={(344.1,73.14)}, rotate = 128.78] [color={rgb, 255:red, 208; green, 2; blue, 27 }  ,draw opacity=1 ][line width=3]    (14.54,-4.38) .. controls (9.25,-1.86) and (4.4,-0.4) .. (0,0) .. controls (4.4,0.4) and (9.25,1.86) .. (14.54,4.38)   ;
\draw [color={rgb, 255:red, 74; green, 144; blue, 226 }  ,draw opacity=1 ][line width=3]    (348.5,78.5) -- (348.85,138.93) ;
\draw [shift={(348.88,143.93)}, rotate = 269.67] [color={rgb, 255:red, 74; green, 144; blue, 226 }  ,draw opacity=1 ][line width=3]    (14.54,-4.38) .. controls (9.25,-1.86) and (4.4,-0.4) .. (0,0) .. controls (4.4,0.4) and (9.25,1.86) .. (14.54,4.38)   ;
\draw [color={rgb, 255:red, 208; green, 2; blue, 27 }  ,draw opacity=1 ][line width=3]    (353.1,144.14) -- (394.97,92.04) ;
\draw [shift={(398.1,88.14)}, rotate = 128.78] [color={rgb, 255:red, 208; green, 2; blue, 27 }  ,draw opacity=1 ][line width=3]    (14.54,-4.38) .. controls (9.25,-1.86) and (4.4,-0.4) .. (0,0) .. controls (4.4,0.4) and (9.25,1.86) .. (14.54,4.38)   ;
\draw [color={rgb, 255:red, 74; green, 144; blue, 226 }  ,draw opacity=1 ][line width=3]    (402.5,93.5) -- (401.93,144.93) ;
\draw [shift={(401.88,149.93)}, rotate = 270.63] [color={rgb, 255:red, 74; green, 144; blue, 226 }  ,draw opacity=1 ][line width=3]    (14.54,-4.38) .. controls (9.25,-1.86) and (4.4,-0.4) .. (0,0) .. controls (4.4,0.4) and (9.25,1.86) .. (14.54,4.38)   ;
\draw [color={rgb, 255:red, 208; green, 2; blue, 27 }  ,draw opacity=1 ][line width=3]    (406.1,152.14) -- (447.97,100.04) ;
\draw [shift={(451.1,96.14)}, rotate = 128.78] [color={rgb, 255:red, 208; green, 2; blue, 27 }  ,draw opacity=1 ][line width=3]    (14.54,-4.38) .. controls (9.25,-1.86) and (4.4,-0.4) .. (0,0) .. controls (4.4,0.4) and (9.25,1.86) .. (14.54,4.38)   ;
\draw [color={rgb, 255:red, 144; green, 19; blue, 254 }  ,draw opacity=1 ][line width=3]    (460.5,100.5) -- (459.91,184.93) ;
\draw [shift={(459.88,189.93)}, rotate = 270.4] [color={rgb, 255:red, 144; green, 19; blue, 254 }  ,draw opacity=1 ][line width=3]    (14.54,-4.38) .. controls (9.25,-1.86) and (4.4,-0.4) .. (0,0) .. controls (4.4,0.4) and (9.25,1.86) .. (14.54,4.38)   ;
\draw [color={rgb, 255:red, 74; green, 144; blue, 226 }  ,draw opacity=1 ][line width=3]    (452.5,100.5) -- (451.91,184.93) ;
\draw [shift={(451.88,189.93)}, rotate = 270.4] [color={rgb, 255:red, 74; green, 144; blue, 226 }  ,draw opacity=1 ][line width=3]    (14.54,-4.38) .. controls (9.25,-1.86) and (4.4,-0.4) .. (0,0) .. controls (4.4,0.4) and (9.25,1.86) .. (14.54,4.38)   ;
\draw  [color={rgb, 255:red, 184; green, 233; blue, 134 }  ,draw opacity=1 ][fill={rgb, 255:red, 126; green, 211; blue, 33 }  ,fill opacity=1 ] (452.1,95.14) .. controls (452.1,92.1) and (454.56,89.64) .. (457.6,89.64) .. controls (460.64,89.64) and (463.1,92.1) .. (463.1,95.14) .. controls (463.1,98.18) and (460.64,100.64) .. (457.6,100.64) .. controls (454.56,100.64) and (452.1,98.18) .. (452.1,95.14) -- cycle ;
\draw    (338,174) -- (366,174) ;
\draw [shift={(368,174)}, rotate = 180] [color={rgb, 255:red, 0; green, 0; blue, 0 }  ][line width=0.75]    (10.93,-3.29) .. controls (6.95,-1.4) and (3.31,-0.3) .. (0,0) .. controls (3.31,0.3) and (6.95,1.4) .. (10.93,3.29)   ;
\draw    (514,82) -- (498.01,109.27) ;
\draw [shift={(497,111)}, rotate = 300.38] [color={rgb, 255:red, 0; green, 0; blue, 0 }  ][line width=0.75]    (10.93,-3.29) .. controls (6.95,-1.4) and (3.31,-0.3) .. (0,0) .. controls (3.31,0.3) and (6.95,1.4) .. (10.93,3.29)   ;
\draw    (363,42) -- (437.09,64.42) ;
\draw [shift={(439,65)}, rotate = 196.84] [color={rgb, 255:red, 0; green, 0; blue, 0 }  ][line width=0.75]    (10.93,-3.29) .. controls (6.95,-1.4) and (3.31,-0.3) .. (0,0) .. controls (3.31,0.3) and (6.95,1.4) .. (10.93,3.29)   ;

\draw (341,40) node  [font=\large] [align=left] {\begin{minipage}[lt]{68pt}\setlength\topsep{0pt}
IP-EOT
\end{minipage}};
\draw (430,174) node   [align=left] {\begin{minipage}[lt]{68pt}\setlength\topsep{0pt}
Sinkhorn
\end{minipage}};
\draw (175.44,227) node  [font=\large] [align=left] {\begin{minipage}[lt]{139.32pt}\setlength\topsep{0pt}
Feasible domain
\end{minipage}};
\draw (572.38,200.07) node  [font=\large] [align=left] {\begin{minipage}[lt]{101.16pt}\setlength\topsep{0pt}
Optimal solutions
\end{minipage}};
\draw (530,125) node   [align=left] {\begin{minipage}[lt]{68pt}\setlength\topsep{0pt}
Newton
\end{minipage}};
\draw (443,211.4) node [anchor=north west][inner sep=0.75pt]  [font=\large]  {$S_{\U_{\mathbf{q}}^{\mathbf{p}}}^{\beta }$};
\draw (451,53.4) node [anchor=north west][inner sep=0.75pt]  [font=\large]  {$\hat{P}^{( t)}$};
\draw (300,175.46) node   [align=left] {\begin{minipage}[lt]{46.92pt}\setlength\topsep{0pt}
Middle $\displaystyle \beta $
\end{minipage}};
\draw (521.94,74.46) node   [align=left] {\begin{minipage}[lt]{40.88pt}\setlength\topsep{0pt}
Large $\displaystyle \beta $
\end{minipage}};
\end{tikzpicture}
\caption{Schematic of the PSN scheduling. IP‑EOT constructs a warm‑start satisfying the same-kernel condition; for larger $\beta$, Newton method is used to reduce marginal residuals, while for smaller $\beta$, Sinkhorn iteration is employed.}
\label{fig:psn-architecture}
\end{figure}

The Newton refinement solves the \emph{dual marginal equations}. For dual potentials
$\mathbf{x}\in\mathbb{R}^m$ and $\mathbf{y}\in\mathbb{R}^n$, define
\begin{equation}
P(\mathbf{x},\mathbf{y})
=
\expo\!\left(
\beta\left(
S+\mathbf{x}\mathbf{1}_n^\top
+\mathbf{1}_m\mathbf{y}^\top
\right)
\right),
\end{equation}
where the exponential is taken element-wise. The marginal residual is
\begin{equation}
R(\mathbf{x},\mathbf{y})
=
\begin{pmatrix}
P(\mathbf{x},\mathbf{y})\mathbf{1}_n-\mathbf{p}\\[2mm]
P(\mathbf{x},\mathbf{y})^\top\mathbf{1}_m-\mathbf{q}
\end{pmatrix}.
\end{equation}

Let
\[
\mathbf{z}
=
\begin{pmatrix}
\mathbf{x}\\
\mathbf{y}
\end{pmatrix},
\qquad
P_k=P(\mathbf{x}_k,\mathbf{y}_k).
\]
The Jacobian of the residual has the block structure
\begin{equation}
H_k
=
\beta
\begin{pmatrix}
\diag(P_k\mathbf{1}_n) & P_k\\
P_k^\top & \diag(P_k^\top\mathbf{1}_m)
\end{pmatrix}.
\end{equation}

Accordingly, the Newton direction is obtained from
\begin{equation}
H_k\Delta\mathbf{z}_k
=
-R(\mathbf{x}_k,\mathbf{y}_k),
\label{equ:newton-system}
\end{equation}
followed by the update
\begin{equation}
\mathbf{z}_{k+1}
=
\mathbf{z}_k+\Delta\mathbf{z}_k.
\label{equ:newton}
\end{equation}

The dual potentials are invariant under the transformation
\[
(\mathbf{x},\mathbf{y})
\mapsto
(\mathbf{x}+c\mathbf{1}_m,\,
\mathbf{y}-c\mathbf{1}_n),
\qquad c\in\mathbb{R},
\]
so the full Hessian $H_k$ has a one-dimensional null space. In practice,
this gauge freedom is removed, for example, by fixing one component of
$\mathbf{y}$, and the Newton direction is computed from the resulting
reduced nonsingular linear system.

To reduce the cost of Hessian solves, PSN employs {sparsification}: thresholding small entries of ${P}_k$ and solving the reduced system with incomplete Cholesky preconditioned conjugate gradient (PCG).

\begin{theorem}[Local Newton refinement \cite{Shen2026OpenReview}]
\label{thm:newton}
Under standard nondegeneracy assumptions and with initialization sufficiently close to the optimal scaling potentials, the exact Newton iteration achieves local quadratic convergence. With the sparse Hessian approximation, convergence is preserved provided the truncation error is controlled by the first-order residual and the approximated Hessian remains positive definite with bounded inverse.
\end{theorem}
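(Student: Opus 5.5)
The plan is to view the Newton refinement as Newton's method applied to the gradient of a smooth, strictly convex dual function, after the gauge direction has been removed. Then I would appeal to the classical local theory for exact and inexact Newton methods.

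\textbf{Step 1: recast the residual as a gradient.} Define
\[
F(\mathbf{z}) = \frac{1}{\beta}\mathbf{1}_m^\top P(\mathbf{x},\mathbf{y})\mathbf{1}_n - \mathbf{p}^\top\mathbf{x} - \mathbf{q}^\top\mathbf{y}.
\]
Then $\nabla F = R$ and $\nabla^2 F = H_k$, so \eqref{equ:newton-system} is exactly the Newton step for minimizing $F$. Fixing one component of $\mathbf{y}$ restricts $F$ to a hyperplane transversal to the null direction $(\mathbf{1}_m,-\mathbf{1}_n)$.

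\textbf{Step 2: nonsingularity at the solution.} Let $\mathbf{z}^\ast$ be the optimal potentials, which exist because $P(\mathbf{z}^\ast)=S^\beta_{\mathbb{U}^\mathbf{p}_\mathbf{q}}$ by the KL-projection characterization. Here $P(\mathbf{z}^\ast)$ is entrywise positive. The quadratic form is $\mathbf{w}^\top H \mathbf{w} = \beta\sum_{i,j}P_{ij}(a_i+b_j)^2$ for $\mathbf{w}=(\mathbf{a},\mathbf{b})$. It vanishes only if $a_i+b_j=0$ for all $(i,j)$, that is, only along the gauge direction, since the bipartite support graph is connected. Hence the reduced Hessian is positive definite at $\mathbf{z}^\ast$; this is the \emph{nondegeneracy assumption} made precise.

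\textbf{Step 3: exact Newton convergence.} The Hessian entries are exponentials of affine functions of $\mathbf{z}$, so $H$ is locally Lipschitz. The standard Newton theorem (Kantorovich/Dennis--Schnabel) then gives $\|\mathbf{z}_{k+1}-\mathbf{z}^\ast\|\le C\|\mathbf{z}_k-\mathbf{z}^\ast\|^2$ in a neighborhood of $\mathbf{z}^\ast$.

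\textbf{Step 4: the sparsified Hessian.} Replace $H_k$ by $\tilde H_k = H_k + E_k$, where $E_k$ collects the thresholded entries. Write the step as $\tilde H_k\Delta\mathbf{z}_k=-R_k$ and expand
\[
\mathbf{z}_{k+1}-\mathbf{z}^\ast = \tilde H_k^{-1}\bigl[(\tilde H_k - H_k)(\mathbf{z}_k-\mathbf{z}^\ast) - (R_k - H_k(\mathbf{z}_k-\mathbf{z}^\ast))\bigr].
\]
The second bracketed term is $O(\|\mathbf{z}_k-\mathbf{z}^\ast\|^2)$ by Taylor's theorem. The hypotheses give $\|\tilde H_k^{-1}\|\le M$ and $\|E_k\|\le c\|R_k\| \le c' \|\mathbf{z}_k-\mathbf{z}^\ast\|$, since $R$ is Lipschitz and $R(\mathbf{z}^\ast)=0$. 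The first term is therefore also quadratic, so the quadratic rate is preserved. If instead the truncation is only bounded by $\eta\|R_k\|$ with $\eta<1$, the same computation gives linear convergence in the inexact-Newton sense of Dembo--Eisenstat--Steihaug. The PCG inner solve is handled the same way, by requiring its linear residual to be $O(\|R_k\|^2)$.

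\textbf{Main obstacle.} The delicate point is the gauge freedom. I must check that the reduced system stays nonsingular and that the perturbation $E_k$ does not reintroduce a near-null direction. I would handle this by working throughout on the quotient space orthogonal to $(\mathbf{1}_m,-\mathbf{1}_n)$. There I would use the ``bounded inverse'' hypothesis for $\tilde H_k$ together with continuity of $H$ to obtain a uniform bound on a neighborhood.
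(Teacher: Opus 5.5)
Your argument is correct and is the standard one the statement relies on. The paper gives no proof of its own here (it only describes the setup and defers to \cite{Shen2026OpenReview}), but your route matches that setup: $R=\nabla F$, $H_k=\nabla^2 F$, the one-dimensional gauge kernel $(\mathbf{1}_m,-\mathbf{1}_n)$ removed by fixing a component of $\mathbf{y}$, and then the classical Newton error recursion with $\|\tilde H_k^{-1}\|\le M$ and $\|\tilde H_k-H_k\|\le c\|R_k\|$, which is exactly what the stated hypotheses are designed for.

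Two small corrections. First, the substantive nondegeneracy hypothesis is $\mathbf{p}>0$, $\mathbf{q}>0$, which is needed for finite potentials $\mathbf{z}^\ast$ to exist. Positive definiteness of the reduced Hessian is then automatic because $P>0$ everywhere, and the bounded-inverse condition on $\tilde H_k$ even follows locally from the truncation bound by a Banach-lemma perturbation. Second, in your linear-rate variant the bound $\eta\|R_k\|$ should be imposed on the linear residual $\|E_k\Delta\mathbf{z}_k\|$; imposing $\|E_k\|\le\eta\|R_k\|$ just reproduces your quadratic case.
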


A {Sinkhorn fallback} mechanism ensures robustness: if the Newton step fails to provide sufficient descent, the algorithm reverts to several Sinkhorn iterations before retrying. This hybrid strategy guarantees global convergence while exploiting local second-order acceleration when conditions are favorable. Figure \ref{fig.psn1} shows the performance of IP-EOT and PSN.

\section{A Comparative Synthesis}

Let us step back and compare the two regularization geometries across the two application domains.

\begin{table}[htbp]
\centering
\caption{Comparative synthesis of regularization geometries across domains.}
\label{tab:synthesis}

\renewcommand{\arraystretch}{1.2}
\begin{tabularx}{\linewidth}{
    @{}
    >{\raggedright\arraybackslash}p{0.22\linewidth}
    >{\raggedright\arraybackslash}X
    >{\raggedright\arraybackslash}X
    @{}
}
\toprule
\textbf{Dimension}
& \textbf{Graph Matching}
& \textbf{Optimal Transport} \\
\midrule

Coupling role
& Search direction in projected-gradient
& Final transport plan \\

Precision requirement
& Moderate (outer loop corrects)
& High (tight marginal feasibility) \\

Entropy/KL route
& CSGO, ASM: probabilistic directions via softassign
& IP-EOT warm-start + Sinkhorn/Newton for exact KL projection \\

Frobenius route
& FRAM: Euclidean directions via SDSN
& Less common \\

Key algorithmic tool
& Sinkhorn scaling (entropy); SDSN (Frobenius)
& Sinkhorn scaling + Newton refinement (entropy) \\

Parameter control
& $\beta=\gamma\log n$ (CSGO); adaptive $\beta$ (ASM)
& $\Delta\beta$ schedule (IP-EOT) \\

\bottomrule
\end{tabularx}
\end{table}

The unifying perspective reveals that {the same mathematical object}---a regularized coupling map on a matrix polytope---serves different purposes depending on the outer problem. In graph matching, it is a {direction generator}; in optimal transport, it is a {solution representation}. The regularizer determines the geometry: entropy induces an information-geometric structure; Frobenius corresponds to Euclidean.

\section{Connections to Broader Mathematics}

\subsection{Information Geometry and Bregman Divergences}

The entropy path is deeply connected to {information geometry}. The KL projection $\Gamma^{\KL}_{\mathbb{U}}$ is a special case of {Bregman projection} \cite{Bregman1967, Csiszar1975}, where the convex function is the negative entropy $\mathcal{H}(X) = \sum_{i,j} X_{ij}\log X_{ij}$. The Bregman divergence:
\begin{equation}
D_\mathcal{H}(X \| Y) = \mathcal{H}(X) - \mathcal{H}(Y) - \langle \nabla\mathcal{H}(Y), X-Y \rangle
\end{equation}
generalizes squared Euclidean distance and induces a Riemannian metric on the simplex. Sinkhorn scaling can be interpreted as alternating KL/Bregman projections onto the affine sets enforcing the prescribed row and column marginals \cite{Benamou2015}. The negative-entropy geometry induces the Fisher information metric on the positive simplex. Separately, optimal transport provides the metric structure underlying variational formulations of Wasserstein gradient flows \cite{Jordan1998}.

The transportation polytope $\mathbb{U}^\mathbf{p}_\mathbf{q}$ is closely related to the {Coupling Matrix Manifold} (CMM) $\mathbb{C}_n^m(\mathbf{p}, \mathbf{q})$ introduced by Shi et al.~\cite{Shi2021}, which is defined as the set of \emph{strictly positive} coupling matrices with the same marginal constraints:
\begin{equation}
\mathbb{C}_n^m(\mathbf{p}, \mathbf{q}) = \left\{ X \in \R_{++}^{m \times n} : X\mathbf{1}_n = \mathbf{p},\, X^\top\mathbf{1}_m = \mathbf{q} \right\}.
\end{equation}
The CMM is a smooth Riemannian manifold of dimension $(m-1)(n-1)$, equipped with the Fisher information metric \cite{Shi2021}. In contrast, our matrix polytope $\mathbb{U}^\mathbf{p}_\mathbf{q}$ includes the boundary (nonnegative entries) and is a convex polytope rather than a manifold. When $\mathbf{p}\in\R_{++}^m$ and $\mathbf{q}\in\R_{++}^n$, the relative interior of $\mathbb{U}^\mathbf{p}_\mathbf{q}$ coincides with $\mathbb{C}_n^m(\mathbf{p},\mathbf{q})$. This distinction is important: the CMM framework enables Riemannian optimization algorithms (gradient descent, trust region), while our matrix polytope framework enables convex analysis and regularized projection methods.

\subsection{Gromov--Wasserstein and Second-Order Matching}

Graph matching can be viewed through the lens of {Gromov--Wasserstein} (GW) distance \cite{Gromov1999, Memoli2011, Peyre2016GW}, which compares metric measure spaces by optimizing over couplings that preserve pairwise distances. Under a squared-loss GW formulation and fixed marginals, the coupling-dependent quadratic term can be reduced, up to marginal-dependent constants and sign conventions, to a form closely related to the Koopmans--Beckmann quadratic objective. The {Fused Gromov--Wasserstein} (FGW) framework \cite{Vayer2019} extends this to simultaneously account for node features (first-order) and structural similarity (second-order), providing a unified metric for attributed graph matching.

\subsection{Mixed-Precision Computing and Numerical Analysis}

The error-damping analysis of SDSN connects to the broader theme of {mixed-precision algorithms} in numerical linear algebra \cite{Carson2018, Higham2022}. Just as iterative refinement can recover full-precision solutions from low-precision LU factors, SDSN's alternating projection structure damps truncation errors through the zero-sum subspace projection. This observation opens the door to hardware-aware algorithm design, where GPU tensor cores (optimized for FP16) can accelerate the inner iterations while CPU FP64 handles the outer loop.

\section{Concluding Remarks}

This exposition has traced a path from the classical Birkhoff--von Neumann theorem through two modern regularization geometries---entropy/KL and the Frobenius/Euclidean---to contemporary algorithms for graph matching and optimal transport. The unifying concept is the {regularized coupling map} on a matrix polytope: a single mathematical object that changes its role from direction generator to solution representation depending on the problem context.

Several directions merit further exploration:

\begin{enumerate}[leftmargin=2em]
\item \textbf{Unified convergence theory}: Can the projected gradient outer-loop analysis for graph matching and the proximal-point theory for optimal transport be subsumed under a single variational framework?

\item \textbf{Infinite-dimensional limits}: As $n \to \infty$, do the discrete regularization paths converge to continuum objects? The entropic path connects to the Schr\'{o}dinger bridge problem \cite{Leonard2014}; the Frobenius path may relate to gradient flows in the Wasserstein space with $L^2$ regularization.

\item \textbf{Randomized and sketching variants}: For truly massive problems ($n \sim 10^6$), can randomized sampling or matrix sketching reduce the effective dimension of the coupling while preserving the regularization structure?

\item \textbf{Quantum analogues}: The Birkhoff--von Neumann theorem has quantum extensions to doubly stochastic operators and unital channels. Do regularized coupling maps have meaningful quantum counterparts, perhaps via quantum Sinkhorn scaling \cite{Gurvits2002Quantum}?

\item \textbf{Riemannian-regularized hybrid methods}: Can the Riemannian geometry of the coupling matrix manifold \cite{Shi2021} be combined with our regularized coupling map framework to develop hybrid algorithms that exploit both the geometric structure (via Riemannian optimization) and the regularization structure (via our coupling maps)?
\end{enumerate}

The matrix polytope, humble as it may seem, continues to reveal deep connections between combinatorics, optimization, geometry, and computation. We hope this exposition encourages mathematicians across these fields to explore the rich structure of regularized coupling maps.

\section*{Acknowledgments}

The authors thank their collaborators Qiang Niu, Yuan Liang, and Chujie Ouyang for their essential contributions to the research surveyed here. This work was supported by the National Key R\&D Program of China (2025YFG0202100, 2025YFG0202600) and the Guangdong Provincial Key Laboratory of Interdisciplinary Research and Application for Data Science.


\begin{thebibliography}{99}

\bibitem{Beckmann1957}
T. C. Koopmans and M. J. Beckmann, \textit{Assignment Problems and the Location of Economic Activities}, Econometrica \textbf{25} (1957), 53--76.

\bibitem{Beck2003}
A. Beck and M. Teboulle, \textit{Mirror descent and nonlinear projected subgradient methods for convex optimization}, Operations Research Letters \textbf{31} (2003), 167--175.

\bibitem{Bauschke1996}
H. H. Bauschke and J. M. Borwein, \textit{On projection algorithms for solving convex feasibility problems}, SIAM Review \textbf{38} (1996), 367--426.

\bibitem{Benamou2015}
J.-D. Benamou, G. Carlier, M. Cuturi, L. Nenna, and G. Peyr\'{e}, \textit{Iterative Bregman projections for regularized transportation problems}, SIAM J. Sci. Comput. \textbf{37} (2015), A1111--A1138.

\bibitem{Birkhoff1946}
G. Birkhoff, \textit{Tres observaciones sobre el \'{a}lgebra lineal}, Univ. Nac. Tucum\'{a}n Rev. Ser. A \textbf{5} (1946), 147--151.

\bibitem{Bregman1967}
L. M. Bregman, \textit{The relaxation method of finding the common point of convex sets and its application to the solution of problems in convex programming}, USSR Comput. Math. Math. Phys. \textbf{7} (1967), 200--217.

\bibitem{Carson2018}
E. Carson and N. J. Higham, \textit{Accelerating the solution of linear systems by iterative refinement in three precisions}, SIAM J. Sci. Comput. \textbf{40} (2018), A817--A847.

\bibitem{garey1979computers} Michael R.~Garey and David S.~Johnson, \emph{Computers and Intractability: A Guide to the Theory of NP-Completeness}, W.H. Freeman \& Co., 1979.

\bibitem{Cominetti1994}
R. Cominetti and J. San Mart\'{i}n, \textit{Asymptotic analysis of the exponential penalty trajectory in linear programming}, Math. Program. \textbf{67} (1994), 169--187.

\bibitem{Cuturi2013}
M. Cuturi, \textit{Sinkhorn distances: Lightspeed computation of optimal transport}, in \textit{Advances in Neural Information Processing Systems}, 2013, pp. 2292--2300.

\bibitem{Csiszar1975}
I. Csisz\'{a}r, \textit{I-divergence geometry of probability distributions and minimization problems}, Ann. Probab. \textbf{3} (1975), 146--158.

\bibitem{Deming1940}
W. E. Deming and F. F. Stephan, \textit{On a least squares adjustment of a sampled frequency table when the expected marginal totals are known}, Ann. Math. Statist. \textbf{11} (1940), 427--444.

\bibitem{Douik2018}
A. Douik and B. Hassibi, \textit{Manifold optimization for optimal transport}, arXiv:1810.00212, 2018.

\bibitem{Frank1956}
M. Frank and P. Wolfe, \textit{An algorithm for quadratic programming}, Naval Res. Logist. Quart. \textbf{3} (1956), 95--110.

\bibitem{Gromov1999}
M. Gromov, \textit{Metric Structures for Riemannian and Non-Riemannian Spaces}, Birkh\"{a}user, 1999.

\bibitem{Gurvits2002}
L. Gurvits and A. Samorodnitsky, \textit{A deterministic algorithm for approximating the mixed discriminant and mixed volume, and a combinatorial corollary}, Discrete Comput. Geom. \textbf{27} (2002), 531--550.

\bibitem{Higham2022}
N. J. Higham and T. Mary, \textit{Mixed precision algorithms in numerical linear algebra}, Acta Numer. \textbf{31} (2022), 347--414.

\bibitem{Idel2016}
M. Idel, \textit{A review of matrix scaling and Sinkhorn's normal form for matrices and positive maps}, arXiv:1609.06349, 2016.

\bibitem{Jordan1998}
R. Jordan, D. Kinderlehrer, and F. Otto, \textit{The variational formulation of the Fokker-Planck equation}, SIAM J. Math. Anal. \textbf{29} (1998), 1--17.

\bibitem{Kruithof1937}
J. Kruithof, \textit{Telefoonverkeersrekening}, De Ingenieur \textbf{52} (1937), E15--E25.

\bibitem{Lawler1963}
E. L. Lawler, \textit{The quadratic assignment problem}, Management Sci. \textbf{9} (1963), 586--599.

\bibitem{Leonard2014}
C. L\'{e}onard, \textit{A survey of the Schr\'{o}dinger problem and some of its connections with optimal transport}, Discrete Contin. Dyn. Syst. A \textbf{34} (2014), 1533--1574.

\bibitem{Memoli2011}
F. M\'{e}moli, \textit{Gromov--Wasserstein distances and the metric approach to object matching}, Found. Comput. Math. \textbf{11} (2011), 417--487.

\bibitem{Peyre2016GW}
G. Peyr\'{e}, M. Cuturi, and J. Solomon, \textit{Gromov-Wasserstein averaging of kernel and distance matrices}, in \textit{Proc. ICML}, 2016, pp. 2664--2672.

\bibitem{Peyre2019}
G. Peyr\'{e} and M. Cuturi, \textit{Computational optimal transport}, Found. Trends Mach. Learn. \textbf{11} (2019), 355--607.

\bibitem{Rockafellar1970}
R. T. Rockafellar, \textit{Convex Analysis}, Princeton University Press, 1970.

\bibitem{Sinkhorn1964}
R. Sinkhorn, \textit{A relationship between arbitrary positive matrices and doubly stochastic matrices}, Ann. Math. Statist. \textbf{35} (1964), 876--879.

\bibitem{SinkhornKnopp1967}
R. Sinkhorn and P. Knopp, \textit{Concerning nonnegative matrices and doubly stochastic matrices}, Pacific J. Math. \textbf{21} (1967), 343--348.

\bibitem{SomaUschmajew2026}
T. Soma and A. Uschmajew, \textit{Accelerating operator Sinkhorn iteration with overrelaxation}, Math. Program. (2026), \url{https://doi.org/10.1007/s10107-026-02361-1}.

\bibitem{Shi2021}
Y. Shi, J. Gao, X. Hong, S. T. B. Choy, and Z. Wang, \textit{Coupling matrix manifolds assisted optimization for optimal transport problems}, Machine Learning \textbf{110} (2021), 533--558.

\bibitem{Shen2024CVPR}
B. Shen, Q. Niu, and S. Zhu, \textit{Adaptive softassign via Hadamard-equipped Sinkhorn}, in \textit{Proc. CVPR}, 2024.

\bibitem{Shen2026CSGO}
B. Shen, Q. Niu, and S. Zhu, \textit{CSGO: Constrained-softassign gradient optimization for large graph matching}, Pattern Recognition \textbf{177} (2026), 113329.

\bibitem{Shen2025NeurIPS}
B. Shen, Y. Liang, and S. Zhu, \textit{FRAM: Frobenius-regularized assignment matching with mixed-precision computing}, in \textit{Proc. NeurIPS}, 2025.

\bibitem{Shen2026OpenReview}
B. Shen, C. Ouyang, and S. Zhu, \textit{A Proximal-Sinkhorn-Newton method for entropic optimal transport}, OpenReview, 2026.

\bibitem{Lu2016}
Y. Lu, K. Huang, and C.-L. Liu,
A fast projected fixed-point algorithm for large graph matching,
\emph{Pattern Recognition},
vol. 60, pp. 971--982, 2016.

\bibitem{Gurvits2002Quantum}
L. Gurvits,
\textit{Quantum Matching Theory (with New Complexity-Theoretic,
Combinatorial and Topological Insights on the Nature of the Quantum Entanglement)},
arXiv:quant-ph/0201022, 2002.

\bibitem{Gold1996}
S. Gold and A. Rangarajan,
A graduated assignment algorithm for graph matching,
\emph{IEEE Transactions on Pattern Analysis and Machine Intelligence},
vol. 18, no. 4, pp. 377--388, 1996.


\bibitem{Vayer2019}
T. Vayer, L. Chapel, R. Flamary, R. Tavenard, and N. Courty, \textit{Optimal transport for structured data with application on graphs}, in \textit{Proc. ICML}, 2019, pp. 6275--6284.

\bibitem{vonNeumann1953}
J. von Neumann, \textit{A certain zero-sum two-person game equivalent to the optimal assignment problem}, in \textit{Contributions to the Theory of Games}, Ann. Math. Studies \textbf{28}, Princeton University Press, 1953, pp. 5--12.

\bibitem{xie2020fast}
Yujia Xie, Xiangfeng Wang, Ruijia Wang, and Hongyuan Zha.
\newblock A fast proximal point method for computing exact Wasserstein distance.
\newblock In \emph{Proceedings of the 35th Uncertainty in Artificial Intelligence Conference},
volume 115, pages 433--453. PMLR, 2020.


\bibitem{Villani2009}
C. Villani, \textit{Optimal Transport: Old and New}, Grundlehren der mathematischen Wissenschaften \textbf{338}, Springer, 2009.

\end{thebibliography}
\end{document}